\newtheorem{theorem}{Theorem}
\newtheorem{definition}[theorem]{Definition}
\newtheorem{lemma}[theorem]{Lemma}
\newtheorem{corollary}[theorem]{Corollary}
\newcommand{\R}{\mathbb R}
\newcommand{\HH}{\mathbb H}
\numberwithin{theorem}{section}
\numberwithin{equation}{section}
\title{The Minkowski formula and the quasi-local mass}
\author{Po-Ning Chen, Mu-Tao Wang, and Shing-Tung Yau}
\date{}
\thanks{P.-N. Chen is supported by NSF grant DMS-1308164, M.-T. Wang is supported by NSF grants DMS-1105483 and DMS-1405152,  and S.-T. Yau is supported by NSF
grants  PHY-0714648 and DMS-1308244. This work was partially supported by a grant from the Simons Foundation (\#305519 to Mu-Tao Wang). Part of this work was carried out
when P.-N. Chen and M.-T. Wang were visiting the Department of Mathematics and the Center of Mathematical Sciences and Applications at Harvard  University.}
\begin{document}
\begin{abstract}
In this article, we estimate the quasi-local energy with reference to the Minkowski spacetime \cite{Wang-Yau1,Wang-Yau2}, the anti-de Sitter spacetime \cite{Chen-Wang-Yau}, or the Schwarzschild spacetime \cite{Chen-Wang-Wang-Yau}.  In each case, the reference spacetime admits a conformal Killing--Yano 2-form which facilitates the application of  the Minkowski formula in \cite{Wang-Wang-Zhang} to estimate the quasi-local energy. As a consequence of the positive mass theorems  in \cite{Liu-Yau,Shi-Tam} and the above estimate, we obtain rigidity theorems which characterize the Minkowski spacetime and the hyperbolic space.
\end{abstract}
\maketitle

\section{Introduction}
In this article, we estimate the quasi-local mass with reference to the Minkowski spacetime \cite{Wang-Yau1,Wang-Yau2}, the anti-de Sitter spacetime \cite{Chen-Wang-Yau}, or the Schwarzschild spacetime \cite{Chen-Wang-Wang-Yau}.  In each case, the reference spacetime admits a conformal Killing--Yano 2-form. As a result of this ``hidden symmetry", the classical Minkowski formula is extended to spacelike codimension-two submanifolds in these reference spacetimes by Wang, Wang and Zhang in \cite{Wang-Wang-Zhang}. Our estimate of the quasi-local energy is based on the $k=2$ Minkowski formula for surfaces in these reference spacetimes.

In the classical Minkowski formula \cite{Minkowski}, the mean curvature $H$ and the Gauss curvature $K$ of a surface $\Sigma$ in $\R^3$ are related as follows:
\[ \int H d \Sigma = \int K (X \cdot e_3) d \Sigma,  \]
where $X$ is the position vector of $\R^3$ and $e_3$ is the outward unit normal of the surface $\Sigma$. A major application of the classical Minkowski formula is the rigidity of isometric convex surfaces in $\R^3$, namely, two convex surfaces  $\Sigma_1$ and $\Sigma_2$ in $\R^3$ with the same induced metric are the same up to an isometry of $\R^3$ \cite{Cohn-Vossen}.  The Minkowski formula is used to evaluate the integral of the difference of the mean curvatures.

Let $H_1$ and $H_2$  be the mean curvatures, and let $h_1$ and $h_2$ be the second fundamental forms of the surfaces  $\Sigma_1$ and $\Sigma_2$ given by the two embeddings $X_1, X_2$ into $\R^3$, respectively. It follows from the Minkowski formula that
\[
\begin{split}
\int (H_1 - H_2)  d \Sigma  = &  \int det (h_1 -h_2 )(X_1 \cdot e_3) d \Sigma.\\
\end{split}
\] 
Using the Gauss equations of the surfaces, we can conclude that 
\[det(h_1-h_2) \le 0.\] Reversing the order of two surfaces, we obtain $h_1=h_2$ and thus the two embeddings differ by an isometry of $\R^3$. 
In this article, we use the Minkowski formula in a similar manner to evaluate the quasi-local energy. For the Minkowski reference, this is done in Theorem \ref{eq_comparison_quasilocal_mass}. For the anti-de Sitter reference, this is done in Theorem \ref{comparison_quasilocal_mass_ads} and for the Schwarzschild reference, this is done in Theorem \ref{comparison_quasilocal_mass_sch}.

Using Theorem \ref{eq_comparison_quasilocal_mass} and Theorem \ref{comparison_quasilocal_mass_ads}, we derive upper bounds of the quasi-local energy in terms of the curvature tensor of the physical spacetime. See Theorem \ref{upper_bound_curvature_minkowski} and Corollary \ref{upper_bound_Brown-York} for the Minkowski reference and Corollary \ref{Quasi_local_mass_as_curvature_ads} for the anti-de Sitter reference. Combining the upper bound of the quasi-local energy with the positive mass theorems \cite{Liu-Yau,Shi-Tam}, we obtain rigidity theorems which characterize the Minkowski space, the Euclidean 3-space and the hyperbolic space. See Theorem \ref{rigidity_Minkowski}, Corollary \ref{rigidity_R3} and Theorem \ref{rigidity_ads}, respectively.
\section{Killing--Yano 2-form and the Minkowski formula}
In this section, we review the Minkowski formula of surfaces in 4-dimensional spacetimes admitting a Killing--Yano 2-form \cite{Wang-Wang-Zhang}.
First we recall the definition of the Killing--Yano 2-form. 
\begin{definition}
Let $Q$ be a 2-form in an (n+1)-dimensional pseudo-Riemannian manifold $(\mathfrak N ,\langle ,\rangle)$ with Levi-Civita connection $D$. $Q$ is called a conformal Killing--Yano 2-form if 
\begin{equation}
D_{X}Q(Y,Z) +D_{Y}Q(X,Z) = \frac{1}{n}\left (2 \langle X,Y\rangle  \langle \xi, Z\rangle - \langle X,Z\rangle  \langle \xi, Y\rangle  - \langle Y,Z\rangle  \langle \xi, X\rangle\right  ),
\end{equation}
where $\xi = div Q$.
\end{definition}
In \cite{Wang-Wang-Zhang}, the Minkowski formulae for higher order mixed mean curvatures are derived for submanifolds in a spacetime admitting a Killing--Yano 2-form. In particular, for a spherical symmetric spacetime $\mathfrak N$ with metric
\[  -f(r)^2 dt^2 + \frac{dr^2}{f^2(r)}+r^2 dS^2,  \]
the 2-form \[Q= r dr \wedge dt\] is a conformal Killing--Yano 2-form wtih
\[ div Q = -3 \frac{\partial}{\partial t}.\] 
Let $\Sigma$ be a 2-surface in $\mathfrak N$. Let $\{ e_3, e_4\}$ be a frame of the normal bundle of $\Sigma$ in $\mathfrak N$.  Let $h_3$ and $h_4$ be the second fundamental form of $\Sigma$ in $\mathfrak N$ in the direction of $e_3$ and $e_4$, respectively and let
\[ \alpha_{e_3} (\cdot)= \langle D_{(\cdot)} e_3, e_4 \rangle  \]
be the connection 1-form of the normal bundle determined by the frame $\{ e_3, e_4 \}$. 

Let $H_0$ be the mean curvature vector of $\Sigma$ in $\mathfrak N$ and $J_0$ be the reflection of $H_0$ through the light cone in the normal bundle of $\Sigma$. The $(r,s)=(2,0)$ Minkowski formula \cite[Theorem 4.3]{Wang-Wang-Zhang} is 
\[\begin{split}
&- \int _{\Sigma} \langle J_0 , \frac{\partial }{\partial t} \rangle  d \Sigma\\
=& \int _{\Sigma} \Big\{2 ( det(h_3) - det(h_4)) Q_{34} + ({R^{ab}}_{a3}Q_{b4} -{R^{ab}}_{a4}Q_{b3})  +[ {R^{ab}}_{43} - (d\alpha_{e_3})^{ab}]Q_{ab} \Big\} d \Sigma,
\end{split}
\]
where $R$ denote the curvature tensor of the spacetime $\mathfrak N$. This generalized the $k=2$ Minkowski formula for surfaces in $\R^3$.

In particular, for surfaces in the Minkowski space, the curvature tensor vanish and the formula reduces to 
\[
- \int _{\Sigma} \langle J_0 , \frac{\partial }{\partial t} \rangle  d \Sigma= \int _{\Sigma} \{ 2 ( det(h_3) - det(h_4)) Q(e_3,e_4 )  - (d\alpha_{e_3})^{ab}Q_{ab}\} d \Sigma.
\]
\section{The Minkowski identity and the quasi-local energy}
In this section, we rewrite the Wang--Yau quasi-local energy \cite{Wang-Yau1,Wang-Yau2} using the Minkowski formula for surfaces in $\R^{3,1}$.

Let $N$ be our physical spacetime. Given a spacelike 2-surface $\Sigma$ in $N$, let $\{ e'_3, e'_4\}$ be a frame of the normal bundle of $\Sigma$ in $N$.  Let $h'_3$ and $h'_4$ be the second fundamental forms of $\Sigma$ in $N$ in the direction of $e'_3$ and $e'_4$, respectively, and let $\alpha_{e'_3} $ be the connection 1-form
\[ \alpha_{e'_3} (\cdot)= \langle \nabla_{(\cdot)}^N e'_3, e'_4 \rangle. \]

Let $X$ be an isometric embedding of $\Sigma$ into $\R^{3,1}$. Let  $\{ e_3, e_4\}$ be a frame of the normal bundle of $X(\Sigma)$ in $\R^{3,1}$ and  $h_3$, $h_4$ and  $\alpha_{e_3}$ be the corresponding second fundamental forms and the connection 1-form.

\begin{theorem} \label{Minkowski_compare}
Given a  spacelike 2-surface $\Sigma$  in $N$ and a frame $\{ e'_3, e'_4\}$ of the normal bundle, let $X$ be an isometric embedding of $\Sigma$ into $\R^{3,1}$. Suppose there is a frame $\{ e_3, e_4\}$ of the normal bundle of $X(\Sigma)$ in $\R^{3,1}$ such that
\[   \alpha_{e'_3}=\alpha_{e_3}.\]
Then we have
\begin{equation} \label{eq_comparison_minkowski_2}
\begin{split}
   & \int \{-\langle \frac{\partial}{\partial t} , e_4 \rangle(tr h_3 -tr h'_3) +\langle \frac{\partial}{\partial t} , e_3 \rangle(tr h_4 -tr h'_4) \} d  \Sigma \\
=&  \int [ 2det(h_3) -2det(h_4) - tr h_3 tr h_3' +h_3 \cdot h_3' +tr h_4 tr h_4' - h_4 \cdot h_4' ] Q_{34}d  \Sigma \\
 &+\int  \{ R^{ab}_{\;\;\;\; a4} Q_{b3}-R^{ab}_{\;\;\;\; a3}Q_{b4} - Q_{bc} \sigma^{cd}[(h_{3})_{da}  h_4^{'ab}-(h_{4})_{da}  h_3^{'ab}  ] \}d  \Sigma, \\
\end{split}  
\end{equation}
where $R$ is the curvature tensor for the spacetime $N$.
\end{theorem}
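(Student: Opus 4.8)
The plan is to deduce \eqref{eq_comparison_minkowski_2} by pairing the flat Minkowski formula for the reference surface with the structure equations (Gauss, Codazzi, and Ricci) of the physical embedding, exploiting that the two surfaces share the same induced metric $\sigma$ and, by hypothesis, the same normal connection one-form. I would split the left-hand side into its \emph{unprimed} part, built from $\mathrm{tr}\,h_3,\mathrm{tr}\,h_4$, and its \emph{primed} part, built from $\mathrm{tr}\,h_3',\mathrm{tr}\,h_4'$, and treat the two by parallel but distinct mechanisms.

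First I would apply the $(r,s)=(2,0)$ Minkowski formula of Section 2 to $X(\Sigma)\subset\R^{3,1}$. Since $\R^{3,1}$ is flat, every ambient-curvature term vanishes and the formula collapses to
\[
-\int_\Sigma \langle J_0,\tfrac{\partial}{\partial t}\rangle\, d\Sigma=\int_\Sigma\{2(\det h_3-\det h_4)Q_{34}-(d\alpha_{e_3})^{ab}Q_{ab}\}\, d\Sigma .
\]
Writing the reference mean curvature vector as $H_0=\mathrm{tr}\,h_3\,e_3-\mathrm{tr}\,h_4\,e_4$ and its light-cone reflection as $J_0=-\mathrm{tr}\,h_4\,e_3+\mathrm{tr}\,h_3\,e_4$, I would check $-\langle J_0,\frac{\partial}{\partial t}\rangle=-\langle\frac{\partial}{\partial t},e_4\rangle\,\mathrm{tr}\,h_3+\langle\frac{\partial}{\partial t},e_3\rangle\,\mathrm{tr}\,h_4$, so the left-hand side above is exactly the unprimed part of \eqref{eq_comparison_minkowski_2}. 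This already delivers the $2(\det h_3-\det h_4)Q_{34}$ term and isolates $-(d\alpha_{e_3})^{ab}Q_{ab}$ as the quantity still to be rewritten.

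For the primed part I cannot invoke Section 2 on $\Sigma\subset N$, since $N$ need not carry a conformal Killing--Yano form. Instead I would re-run on $\Sigma$ the divergence computation underlying the Minkowski formula, contracting the \emph{reference} data $Q$, $\frac{\partial}{\partial t}$, and $\xi=\operatorname{div}Q=-3\frac{\partial}{\partial t}$ (which obey the conformal Killing--Yano equation of $\R^{3,1}$ along $X$) against the \emph{physical} second fundamental forms $h_3',h_4'$ and normal connection. The conformal Killing--Yano equation reproduces the $\mathrm{tr}\,h_3',\mathrm{tr}\,h_4'$ terms; the \emph{physical} Codazzi equation, used to integrate the tangential divergence of $h_3',h_4'$ by parts on the closed surface, injects the traced physical curvature $R^{ab}{}_{a4}Q_{b3}-R^{ab}{}_{a3}Q_{b4}$; and because the derivatives of the reference $Q$ carry the reference second fundamental forms, the ``determinant'' terms of this computation appear as the polarized products $\mathrm{tr}\,h_3\,\mathrm{tr}\,h_3'-h_3\cdot h_3'=2\det(h_3,h_3')$ and $\mathrm{tr}\,h_4\,\mathrm{tr}\,h_4'-h_4\cdot h_4'=2\det(h_4,h_4')$, which supply the cross terms of \eqref{eq_comparison_minkowski_2}. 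Subtracting the two identities, the hypothesis $\alpha_{e'_3}=\alpha_{e_3}$ lets me combine the two connection-form contributions through the Ricci equation: in $\R^{3,1}$ it reads $(d\alpha_{e_3})_{ab}=\sigma^{cd}[(h_3)_{ac}(h_4)_{db}-(h_4)_{ac}(h_3)_{db}]$, while in $N$ it carries an additional $R_{ab34}$, and matching the two leftover terms is exactly what produces $-Q_{bc}\sigma^{cd}[(h_3)_{da}h_4^{\prime ab}-(h_4)_{da}h_3^{\prime ab}]$.

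The step I expect to be the main obstacle is this companion computation, in which the reference two-form is differentiated against the physical connection. The delicacy is bookkeeping: one must apply the \emph{physical} Codazzi and Ricci equations (not their flat reference analogues) at each integration by parts, for it is precisely the mismatch between the reference and physical structure equations---controlled by $\alpha_{e'_3}=\alpha_{e_3}$ and the common metric $\sigma$---that yields the physical curvature with the correct \emph{traced} index structure $R^{ab}{}_{a4}Q_{b3}-R^{ab}{}_{a3}Q_{b4}$ together with the mixed second fundamental form term, rather than the untraced contraction $R_{ab34}Q^{ab}$ that a careless application would leave behind.
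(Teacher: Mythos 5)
Your global strategy is the same as the paper's: the paper integrates exactly the two divergence quantities you propose, namely $\nabla_a\bigl([\mathrm{tr}\,h_3\,\sigma^{ab}-h_3^{ab}]Q_{b4}-[\mathrm{tr}\,h_4\,\sigma^{ab}-h_4^{ab}]Q_{b3}\bigr)$, which is the Wang--Wang--Zhang computation and gives your ``unprimed'' identity, and the companion quantity $\nabla_a\bigl([\mathrm{tr}\,h'_3\,\sigma^{ab}-h_3^{\prime ab}]Q_{b4}-[\mathrm{tr}\,h'_4\,\sigma^{ab}-h_4^{\prime ab}]Q_{b3}\bigr)$, in which the Codazzi equations of $\Sigma$ in $N$ inject the traced physical curvature and the conformal Killing--Yano equation produces the $\mathrm{tr}\,h'$ terms; the theorem is the difference of the two integrated identities. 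Up to the last step, your outline of the companion computation is also the paper's.

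The gap is in the step you yourself flag as the main obstacle. The mixed term $-Q_{bc}\sigma^{cd}[(h_3)_{da}h_4^{\prime ab}-(h_4)_{da}h_3^{\prime ab}]$ is \emph{not} produced by matching the Ricci equations of the two embeddings; no Ricci equation enters the paper's proof of this theorem at all. It arises algebraically from the very same product expansion that gives you the polarized determinants. On $X(\Sigma)\subset\R^{3,1}$ one has
\[
\nabla_a Q_{b4} = (D_a Q)_{b4} - (h_3)_{ab}Q_{34} + Q_{bc}\sigma^{cd}(h_4)_{da} - Q_{b3}\zeta_a ,\qquad
\nabla_a Q_{b3} = (D_a Q)_{b3} + (h_4)_{ab}Q_{43} + Q_{bc}\sigma^{cd}(h_3)_{da} - Q_{b4}\zeta_a ,
\]
and when these are contracted against $\mathrm{tr}\,h'_3\,\sigma^{ab}-h_3^{\prime ab}$ and $\mathrm{tr}\,h'_4\,\sigma^{ab}-h_4^{\prime ab}$, the $Q_{34}$ pieces give the polarized determinant terms while the $Q_{bc}$ pieces give precisely the mixed term (the trace parts drop out because $Q_{bc}$ is antisymmetric). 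A Ricci equation could never produce this term: the Ricci equation of $X(\Sigma)$ in $\R^{3,1}$ involves only $h_3,h_4$, and that of $\Sigma$ in $N$ involves only $h'_3,h'_4$ together with $R_{ab34}$, so neither can generate a product of primed with unprimed second fundamental forms; had you carried out your matching step you would instead be left with $R_{ab34}Q^{ab}$ and commutator terms, none of which occur in \eqref{eq_comparison_minkowski_2}, and the mixed term would remain unaccounted for. Relatedly, the hypothesis $\alpha_{e'_3}=\alpha_{e_3}$ is used differently than you propose: it guarantees that the one-form $\zeta$ entering the physical Codazzi equations is the same one-form entering the displayed formulas for $\nabla_a Q_{b4},\nabla_a Q_{b3}$, so that the undifferentiated $\zeta$ terms cancel inside the companion divergence, and, in the paper's bookkeeping, the $(d\zeta)^{ab}Q_{ab}$ terms of the two integrated identities cancel against each other upon subtraction. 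The repair is therefore not a new idea but honest bookkeeping: expand both divergences completely, observe these cancellations, and subtract.
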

\begin{proof}
We consider the following two divergence quantities on $\Sigma$:
\[
\nabla_a \left( [ tr h_3\sigma^{ab} - h_3^{ab}]Q_{b4} - [ tr (h_{4})\sigma^{ab} - h_{4}^{ab}]Q_{b3}\right)
\]
and 
\[
\nabla_a \left( [ tr h'_3\sigma^{ab} - h_3^{'ab}]Q_{b4} - [ tr (h'_{4})\sigma^{ab} - h_{4}^{'ab}]Q_{b3}\right).
\]
The first divergence quantitiy is exactly the one considered in \cite[Theorem 4.3]{Wang-Wang-Zhang}. It gives
\begin{equation}  \label{divergence_one_int}
    \int \{ -\langle \frac{\partial}{\partial t} , e_4 \rangle(tr h_3) +\langle \frac{\partial}{\partial t} , e_3 \rangle(tr h_4) \} d  \Sigma = \int [2[ det(h_3) -det(h_4)]   -(d\zeta)^{ab}Q_{ab} ]d  \Sigma.
\end{equation}
where $\zeta = \alpha_{e'_3}=\alpha_{e_3}$.

For the second divergence quantity, following the proof of \cite[Theorem 4.3]{Wang-Wang-Zhang} (see also \cite[Theorem 3.3]{Wang}), we compute
\begin{equation}
\begin{split}\label{1-Minkowski formula: Codazzi equation}
\nabla_a (tr(h'_3)\sigma^{ab} - h_3^{'ab}) &= -{{R}^{ab}}_{a3} -\zeta^b tr h'_{4} +\zeta_a h_{4}^{'ab} \\
\nabla_a (tr(h'_{4})\sigma^{ab} - h_{4}^{'ab}) &= -{R^{ab}}_{a4} - \zeta^b trh'_3 + \zeta_a h_3^{'ab}.
\end{split}
\end{equation}
On the other hand,
\begin{equation}
\begin{split}\label{1-Minkowski formula: nabla Q}
\nabla_a Q_{b4} &= (D_a Q)_{b4} - (h_3)_{ab}Q_{34} + Q_{bc} \sigma^{cd}(h_{4})_{da} - Q_{b3}\zeta_a \\
\nabla_a Q_{b3} &= (D_a Q)_{b3}+( h_4)_{ab}Q_{43} + Q_{bc} \sigma^{cd}(h_{3})_{da}- Q_{b4}\zeta_a.
\end{split}
\end{equation}
Putting (\ref{1-Minkowski formula: Codazzi equation}) and (\ref{1-Minkowski formula: nabla Q}) together, we get
\begin{equation}\label{after applying Codazzi and nabla Q}
\begin{split}
   &\nabla_a \left( (tr(h'_3) \sigma^{ab} - h_3^{'ab})Q_{b4} - (tr(h'_{4})\sigma^{ab} - h_{4}^{'ab})Q_{b3}\right) \\
= & {R^{ab}}_{a3}Q_{b4} -{R^{ab}}_{a4}Q_{b3} +  (tr(h'_3) \sigma^{ab} - h_3^{'ab})\left(  (D_a Q)_{b4} - (h_3)_{ab}Q_{34} + Q_{bc} \sigma^{cd}(h_{4})_{da}  \right) \\
&-(tr(h'_{4})\sigma^{ab} - h_{4}^{'ab}) \left(  (D_a Q)_{b3}+( h_4)_{ab}Q_{43} + Q_{bc} \sigma^{cd}(h_{3})_{da}\right).
\end{split}
\end{equation}
From the definition of conformal Killing-Yano 2-forms, we have
\begin{align}
  (tr(h'_3) \sigma^{ab} - h_3^{'ab})(D_a Q)_{b4}  \notag 
=& \frac{1}{2} (tr(h'_3) \sigma^{ab} - h_3^{'ab})((D_a Q)_{b4} + (D_b Q)_{a4})\\
 =&   \langle \frac{\partial}{\partial t}, tr(h'_3) e_{4}\rangle. \notag
\end{align}
Similarly,
\begin{align*}
  (tr(h'_4) \sigma^{ab} - h_4^{'ab})(D_a Q)_{b3} =  \langle \frac{\partial}{\partial t}, tr(h'_4) e_{3}\rangle. \notag
\end{align*}
Collecting terms, we get
\begin{equation} \label{divergence_two}\begin{split}
&\nabla_a \left( [ tr h'_3\sigma^{ab} - h_3^{'ab}]Q_{b4} - [ tr (h'_{4})\sigma^{ab} - h_{4}^{'ab}]Q_{b3}\right)\\
 = &R^{ab}_{\;\;\;\; a3}Q_{b4}- R^{ab}_{\;\;\;\; a4} Q_{b3} + \langle \frac{\partial}{\partial t} , e_4 \rangle( tr h'_3) -\langle \frac{\partial}{\partial t} , e_3 \rangle( tr h'_4)-(d\zeta)^{ab}Q_{ab}\\
&+ [tr h_3 tr h_3' -h_3 \cdot h_3' -tr h_4 tr h_4' + h_4 \cdot h_4' ] Q_{34}+  Q_{bc} \sigma^{cd}[(h_{3})_{da}  h_4^{'ab}-(h_{4})_{da}  h_3^{'ab} ].
\end{split}
\end{equation}
Integrating \eqref{divergence_two} over $\Sigma$, we get

\begin{equation} \label{divergence_two_int}
\begin{split}
   &\int \{ - \langle \frac{\partial}{\partial t} , e_4 \rangle( tr h'_3) + \langle \frac{\partial}{\partial t} , e_3 \rangle( tr h'_4) \}  d\Sigma \\
= & \int  [tr h_3 tr h_3' -h_3 \cdot h_3' -tr h_4 tr h_4' + h_4 \cdot h_4' ] Q_{34}  d\Sigma \\
  & +\int \{ R^{ab}_{\;\;\;\; a3}Q_{b4}- R^{ab}_{\;\;\;\; a4} Q_{b3}   +  Q_{bc} \sigma^{cd}[(h_{3})_{da}  h_4^{'ab}-(h_{4})_{da}  h_3^{'ab} ] \}d \Sigma.
\end{split}
\end{equation}  
\eqref{eq_comparison_minkowski_2} follows from the difference between  \eqref{divergence_one_int} and  \eqref{divergence_two_int}.
\end{proof}
Next, we relate the left hand side of \eqref{eq_comparison_minkowski_2} to the Wang-Yau quasi-local energy when the frame $\{ e'_3, e'_4\}$  and $\{ e_3, e_4\}$ are the canonical gauge corresponding to a pair of an isometric embedding $X$ of  $\Sigma$ into $\R^{3,1}$ and a constant future directed timelike unit vector $T_0$. 

Let $\widehat \Sigma$ be the projection of $X(\Sigma)$ onto the orthogonal complement of $T_0$. Let $\breve e_3$ be the unit outward normal of $\widehat \Sigma$ in the  orthogonal complement of $T_0$. We extend $\breve e_3$ along $T_0$ by parallel translation. Let $\breve e_4$ be the unit normal of $\Sigma$ which is also normal to  $\breve e_3$. Let $\{ \bar e_3, \bar e_4\}$ be the unique frame of the normal bundle of $\Sigma$ in $N$ such that 
\[  \langle H ,  \bar e_4 \rangle =  \langle H_0 ,  \breve e_4 \rangle. \]
The quasi-local energy of $\Sigma$ with respect to the pair $(X,T_0)$ is 
\[  E(\Sigma,X,T_0)= \frac{1}{8 \pi} \int [-  \langle H_0 ,  \breve e_3 \rangle  \sqrt{1+|\nabla \tau |^2}- \alpha_{\breve e_3}(\nabla \tau)+\langle H ,  \bar e_3 \rangle \sqrt{1+|\nabla \tau |^2} + \alpha_{\bar e_3}(\nabla \tau)] d\Sigma. \]
Let $h_3$, $h_3'$, $h_4$ and $h_4'$ be the second fundamental form in the directions of $\breve e_3$, $\bar e_3$, $\breve e_4$ and $\bar e_4$, respectively.
If $ \alpha_{\bar e_3}= \alpha_{\breve e_3}$, we have
\begin{equation} \label{eq_comparison_mass_2}
\begin{split}
   &  \frac{1}{8 \pi}  \int [-\langle \frac{\partial}{\partial t} , e_4 \rangle(tr h_3 -tr h'_3) +\langle \frac{\partial}{\partial t} , e_3 \rangle(tr h_4 -tr h'_4) ] d  \Sigma \\
=&   \frac{1}{8 \pi}  \int   [-  \langle H_0 ,  \breve e_3 \rangle  \sqrt{1+|\nabla \tau |^2}+\langle H ,  \bar e_3 \rangle \sqrt{1+|\nabla \tau |^2}] d\Sigma \\
 =&E(\Sigma,X,T_0)
\end{split}  
\end{equation}
Hence, we have proved the following:
\begin{theorem} \label{eq_comparison_quasilocal_mass}
Given a surface $\Sigma$ in the spacetime $N$, suppose we have a pair  $(X,T_0)$ of observer such that $ \alpha_{\bar e_3}= \alpha_{\breve e_3}$. Then we have
\begin{equation} 
\begin{split}
   E(\Sigma,X,T_0) 
=& \frac{1}{8 \pi} \int \{ 2[ det(h_3) -det(h_4)]  - [tr h_3 tr h_3' -h_3 \cdot h_3' -tr h_4 tr h_4' + h_4 \cdot h_4' ] Q_{34} \} d  \Sigma \\
 & + \frac{1}{8 \pi} \int   \{ R^{ab}_{\;\;\;\; a4} Q_{b3}-R^{ab}_{\;\;\;\; a3}Q_{b4} - Q_{bc} \sigma^{cd}[(h_{3})_{da}  h_4^{'ab}-(h_{4})_{da}  h_3^{'ab} ] \} d  \Sigma \\
\end{split}  
\end{equation}
\end{theorem}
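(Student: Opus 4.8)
The plan is to obtain the theorem as the quotient by $8\pi$ of the Minkowski comparison identity \eqref{eq_comparison_minkowski_2} of Theorem \ref{Minkowski_compare}, once its left-hand side has been identified with the Wang--Yau quasi-local energy. Since \eqref{eq_comparison_minkowski_2} is already established, the entire content lies in the identity \eqref{eq_comparison_mass_2}: in the canonical gauge attached to the pair $(X,T_0)$, and under the hypothesis $\alpha_{\bar e_3}=\alpha_{\breve e_3}$, the left-hand integrand $-\langle \partial_t, e_4\rangle(tr\, h_3-tr\, h_3')+\langle \partial_t, e_3\rangle(tr\, h_4-tr\, h_4')$ integrates to $8\pi\, E(\Sigma,X,T_0)$. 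I would prove this identity first and then substitute.

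To establish \eqref{eq_comparison_mass_2} I would use the construction of the canonical frame. Because $\breve e_3$ is by definition the outward normal of the projected surface $\widehat\Sigma$ inside the orthogonal complement $T_0^\perp$, we have $\langle T_0,\breve e_3\rangle=0$; identifying $\partial_t$ with the constant unit timelike vector $T_0$, this annihilates the entire $\langle\partial_t,e_3\rangle$-column of the left-hand side, so the terms in $tr\, h_4$ and $tr\, h_4'$ drop out. For the surviving column I would decompose $T_0$ along $\Sigma$. Differentiating $\tau=-\langle X,T_0\rangle$ yields $T_0^\top=-\nabla\tau$, whence $|T_0^\top|^2=|\nabla\tau|^2$; together with $\langle T_0,T_0\rangle=-1$ and $\langle T_0,\breve e_3\rangle=0$ this forces the normal part of $T_0$ to be proportional to $\breve e_4$ with $\langle T_0,\breve e_4\rangle=\sqrt{1+|\nabla\tau|^2}$, the sign being fixed by the future orientation.

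With these two inner products the left-hand side of \eqref{eq_comparison_minkowski_2} collapses to $-\sqrt{1+|\nabla\tau|^2}\,(tr\, h_3-tr\, h_3')$. Since $h_3$ and $h_3'$ are, by construction, the second fundamental forms in the directions $\breve e_3$ and $\bar e_3$, we have $tr\, h_3=\langle H_0,\breve e_3\rangle$ and $tr\, h_3'=\langle H,\bar e_3\rangle$, so this equals $[-\langle H_0,\breve e_3\rangle+\langle H,\bar e_3\rangle]\sqrt{1+|\nabla\tau|^2}$. Comparing with the definition of $E(\Sigma,X,T_0)$, the two connection-one-form terms $-\alpha_{\breve e_3}(\nabla\tau)$ and $+\alpha_{\bar e_3}(\nabla\tau)$ cancel exactly under the gauge hypothesis $\alpha_{\bar e_3}=\alpha_{\breve e_3}$, leaving $8\pi\, E(\Sigma,X,T_0)=\int[-\langle H_0,\breve e_3\rangle+\langle H,\bar e_3\rangle]\sqrt{1+|\nabla\tau|^2}\,d\Sigma$. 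This is precisely \eqref{eq_comparison_mass_2}.

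The conclusion is then immediate: dividing \eqref{eq_comparison_minkowski_2} by $8\pi$, I replace its left-hand side by $E(\Sigma,X,T_0)$ using the identity just proved, and regroup the right-hand side, factoring $Q_{34}$ over both the determinant difference and the quadratic mean-curvature terms, into the stated form. I expect the only real obstacle to be the Lorentzian bookkeeping: fixing the signature and orientation conventions in the normal bundle so that $\langle T_0,\breve e_4\rangle=\sqrt{1+|\nabla\tau|^2}$ and $tr\, h_3=\langle H_0,\breve e_3\rangle$ carry signs compatible with the convention implicit in \eqref{divergence_one_int}, where the reference column is read off as $-\langle J_0,\partial_t\rangle$. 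Everything else is routine verification that the vanishing of $\langle T_0,\breve e_3\rangle$ genuinely removes the $tr\, h_4$ and $tr\, h_4'$ contributions and that no stray term survives.
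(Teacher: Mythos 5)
Your proposal is correct and takes essentially the same route as the paper: Theorem \ref{Minkowski_compare} combined with the identification \eqref{eq_comparison_mass_2} of its left-hand side with $8\pi E(\Sigma,X,T_0)$ in the canonical gauge, using $\langle T_0,\breve e_3\rangle=0$, the decomposition of $T_0$ giving $|\langle T_0,\breve e_4\rangle|=\sqrt{1+|\nabla\tau|^2}$, and the cancellation of the connection one-form terms under $\alpha_{\bar e_3}=\alpha_{\breve e_3}$ (the paper asserts \eqref{eq_comparison_mass_2} directly; you supply its derivation). The sign bookkeeping you flag is the only delicate point, and your two sign choices are mutually consistent, so the argument goes through.
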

\section{Upper bound of the Liu-Yau quasi-local mass}
In this section, we apply Theorem \ref{eq_comparison_quasilocal_mass} to the Liu-Yau  quasi-local mass.  For a surface $\Sigma$ in $M$, we consider the isometric embedding $X$ of $\Sigma$ into the orthogonal complement of $T_0=\frac{\partial}{\partial t}$. The quasi-local energy $E(\Sigma,X,T_0) $ is precisely the Liu-Yau quasi-local mass $m_{LY}(\Sigma)$ of the surface. To apply Theorem \ref{eq_comparison_quasilocal_mass}, we assume that $\alpha_H = 0$. Under the assumption, $m_{LY}(\Sigma)$  is a critical point of the Wang-Yau quasi-local energy. We have the following lemma concerning the Liu-Yau mass:

\begin{lemma} \label{Quasi_local_mass_as_curvature}
Suppose $\alpha_H = 0$.  The Liu-Yau mass is
\begin{equation}  \label{Liu-Yau-Original}
   m_{LY}(\Sigma) = \frac{1}{8 \pi} \int \{[2 det(h_3) - tr h_3 tr h_3' + h_3 \cdot h_3'  ]  (X \cdot e_3)-R^{ab}_{\;\;\;\; a3} (X \cdot e_b)\} d  \Sigma. \\
\end{equation}
\end{lemma}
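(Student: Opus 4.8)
The plan is to specialize Theorem~\ref{eq_comparison_quasilocal_mass} to the Liu--Yau configuration, where $T_0 = \frac{\partial}{\partial t}$ and $X$ embeds $\Sigma$ isometrically into the totally geodesic slice $\{t = 0\}$, the orthogonal complement of $T_0$ in $\R^{3,1}$. I would begin by recording the geometric simplifications this embedding forces. Since $X(\Sigma)$ lies in $\{t = 0\}$ and $\frac{\partial}{\partial t}$ is a parallel timelike field normal to the slice, we may take $\breve e_4 = \frac{\partial}{\partial t}$; hence the reference second fundamental form in the timelike direction vanishes, $h_4 = 0$, and the reference connection one-form satisfies $\alpha_{\breve e_3}(\cdot) = \langle D_{(\cdot)} \breve e_3, \breve e_4 \rangle = 0$ because $D \breve e_4 = 0$. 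Combined with the hypothesis $\alpha_H = 0$, which forces $\alpha_{\bar e_3} = 0$, this confirms the gauge-matching condition $\alpha_{\bar e_3} = \alpha_{\breve e_3}$ needed to invoke Theorem~\ref{eq_comparison_quasilocal_mass}. As already noted, for this choice of $(X, T_0)$ the energy $E(\Sigma, X, T_0)$ coincides with the Liu--Yau mass $m_{LY}(\Sigma)$.

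Next I would compute the components of the conformal Killing--Yano two-form $Q = r\, dr \wedge dt = (x\, dx + y\, dy + z\, dz) \wedge dt$ in the reference frame with $e_4 = \frac{\partial}{\partial t}$. Since $dt$ annihilates both the spatial normal $e_3$ and the tangent vectors $e_b$ of $X(\Sigma)$, pairing $Q$ against these vectors yields
\[
Q_{34} = X \cdot e_3, \qquad Q_{b4} = X \cdot e_b, \qquad Q_{b3} = 0, \qquad Q_{bc} = 0,
\]
the last identity holding for all tangent indices $b, c$.

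It then remains to substitute these data into Theorem~\ref{eq_comparison_quasilocal_mass}. In the first integrand every term carrying a factor of $h_4$ drops out by $h_4 = 0$, and using $Q_{34} = X \cdot e_3$ the integrand reduces to $[2 \det(h_3) - tr h_3 \, tr h_3' + h_3 \cdot h_3'](X \cdot e_3)$. In the second integrand the term $R^{ab}_{\;\;\;\; a4} Q_{b3}$ vanishes because $Q_{b3} = 0$, while $-R^{ab}_{\;\;\;\; a3} Q_{b4}$ becomes $-R^{ab}_{\;\;\;\; a3}(X \cdot e_b)$. Collecting the surviving terms reproduces \eqref{Liu-Yau-Original}.

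The step I expect to demand the most care is the vanishing of the cross term $Q_{bc} \sigma^{cd}[(h_3)_{da} h_4^{'ab} - (h_4)_{da} h_3^{'ab}]$ in the second integrand. Its second summand vanishes from $h_4 = 0$, but the first summand contains the physical quantity $h_4'$, which is in general nonzero; the term disappears only because the prefactor $Q_{bc}$ is identically zero on the tangent space, as recorded above. Verifying this cancellation, rather than the bookkeeping of the remaining terms, is the crux of the argument.
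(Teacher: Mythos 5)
Your proposal is correct and follows essentially the same route as the paper: specialize Theorem~\ref{eq_comparison_quasilocal_mass} to the embedding into the $t=0$ slice with $\breve e_4 = \frac{\partial}{\partial t}$, note $h_4 = 0$ and the vanishing of both connection one-forms, compute the components of $Q$, and substitute. The one step the paper spells out that you compress is why $\alpha_H = 0$ controls $\alpha_{\bar e_3}$: since $\langle H_0, \breve e_4 \rangle = 0$ for a slice embedding, the defining condition $\langle H, \bar e_4 \rangle = \langle H_0, \breve e_4 \rangle$ forces $\bar e_4 = \frac{J}{|H|}$ and $\bar e_3 = -\frac{H}{|H|}$, which is what identifies $\alpha_{\bar e_3}$ with the mean-curvature-gauge form $\alpha_H$.
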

\begin{proof}
For the isometric embedding into the orthogonal complement of $\frac{\partial}{\partial t}$, we have $\breve e_4 =\frac{\partial}{\partial t}$ and 
\[ \langle H_0, \breve e_4 \rangle =0 .\] Hence, $\bar e_4 = \frac{J}{|H|}$ and 
\[ \alpha_{\bar e_3}= \alpha_{\breve e_3} =0. \]
This verifies the assumption of Theorem \ref{eq_comparison_quasilocal_mass}.
Moreover, for the isometric embedding into the orthogonal complement of $\frac{\partial}{\partial t}$, $h_4=0$. As a result of  Theorem \ref{eq_comparison_quasilocal_mass}, we get
\begin{equation} 
   m_{LY}(\Sigma) = \frac{1}{8 \pi} \int\{ [2 det(h_3) - tr h_3 tr h_3' + h_3 \cdot h_3'  ]  Q_{34} +R^{ab}_{\;\;\;\; a4} Q_{b3} -R^{ab}_{\;\;\;\; a3} Q_{b4} \} d  \Sigma .\\
\end{equation}
Finally, we observe that $Q_{ab}=0$, $Q_{a3} =0$ and
\[
\begin{split}
Q_{34}= & (X \cdot e_3)\\
Q_{b4}= & (X \cdot e_b) 
\end{split}
\]
since $\breve e_4 =\frac{\partial}{\partial t}$ and $Q= r\frac{\partial}{\partial r} \wedge \frac{\partial}{\partial t} $. 
\end{proof}
From Lemma \ref{Quasi_local_mass_as_curvature}, we derive the following upper bound for the Liu-Yau quasi-local mass in terms of the curvature tensor of $N$ along $\Sigma$.
\begin{theorem}  \label{upper_bound_curvature_minkowski}
Let $\Sigma$ be a topological sphere in a spacetime $N$. Let $e'_3 =-\frac{H}{|H|}$ and $e'_4 = \frac{J}{|H|}$. Suppose $\alpha_{e'_3}=0$ and ${R^{ab}}_{a4} =0$.  
Finally, assume the second fundamental form in the direction of $e'_3$ is positive definite and the Gauss curvature of the induced metric on $\Sigma$ is positive. We have
\[ 
m_{LY}(\Sigma) \le \frac{1}{8 \pi} \int \{ 2 R_{1212}^+(X \cdot e_3)  -R^{ab}_{\;\;\;\; a3} (X \cdot e_b) \}d  \Sigma.
 \]
 where $R_{1212}^+ = \max\{ R_{1212}, 0\}$.
\end{theorem}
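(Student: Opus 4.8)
The plan is to start from the explicit expression for $m_{LY}(\Sigma)$ furnished by Lemma~\ref{Quasi_local_mass_as_curvature} and to bound the integrand pointwise. Writing $B:=2\det(h_3)-\operatorname{tr}h_3\operatorname{tr}h_3'+h_3\cdot h_3'$ for the bracket appearing there, the theorem follows once I establish two facts: that the support function satisfies $X\cdot e_3\ge 0$, and that $B\le 2R_{1212}^+$ at every point; the curvature term $-R^{ab}_{\;\;\;\;a3}(X\cdot e_b)$ is simply carried over unchanged. For the support function, since $K>0$ and $\Sigma$ is a topological sphere, its isometric image in the Euclidean slice $\left(\frac{\partial}{\partial t}\right)^\perp$ is a convex ovaloid; translating the origin into the enclosed region makes $X\cdot e_3>0$ everywhere, and this translation leaves both $m_{LY}(\Sigma)$ and the entire right-hand side of Lemma~\ref{Quasi_local_mass_as_curvature} invariant.

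For the pointwise bound, the first step is to simplify the geometry of the physical surface, and this is the step I expect to be the crux. Since $\bar e_3=-H/|H|$ and $\bar e_4=J/|H|$ with $J$ the light-cone reflection of $H$, the mean curvature vector is proportional to $\bar e_3$, so $\langle H,\bar e_4\rangle=0$ and hence $\operatorname{tr}h_4'=0$. Feeding $\alpha_{e'_3}=0$ and $R^{ab}_{\;\;\;\;a4}=0$ into the Codazzi identity \eqref{1-Minkowski formula: Codazzi equation} for the primed surface gives $\nabla_a\bigl(\operatorname{tr}(h_4')\sigma^{ab}-h_4'^{ab}\bigr)=0$, so $h_4'$ is a transverse traceless symmetric $2$-tensor. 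On a topological sphere such tensors correspond to holomorphic quadratic differentials and must therefore vanish, so $h_4'=0$. This is precisely where the hypotheses $R^{ab}_{\;\;\;\;a4}=0$ and ``$\Sigma$ a topological sphere'' are consumed.

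With $h_4'=0$ the two Gauss equations become transparent: the flat reference gives $\det h_3=K$, while the physical Gauss equation gives $\det h_3'=K-R_{1212}$, so that $\det h_3-\det h_3'=R_{1212}$. Using the two-dimensional polarization identity $\operatorname{tr}h_3\operatorname{tr}h_3'-h_3\cdot h_3'=\det h_3+\det h_3'-\det(h_3-h_3')$, I rewrite
\[
B=2\det h_3-\bigl(\operatorname{tr}h_3\operatorname{tr}h_3'-h_3\cdot h_3'\bigr)=\det(h_3-h_3')+(\det h_3-\det h_3')=\det(h_3-h_3')+R_{1212}.
\]
Both $h_3$ and $h_3'$ are positive definite by hypothesis (the latter since $e'_3=-H/|H|=\bar e_3$), so the two-dimensional Alexandrov mixed-discriminant inequality $\tfrac12\bigl(\operatorname{tr}h_3\operatorname{tr}h_3'-h_3\cdot h_3'\bigr)\ge\sqrt{\det h_3\det h_3'}$ yields $\det(h_3-h_3')\le\bigl(\sqrt{\det h_3}-\sqrt{\det h_3'}\bigr)^2=\bigl(\sqrt{K}-\sqrt{K-R_{1212}}\bigr)^2$.

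Finally, setting $u=\sqrt{K}\ge 0$ and $v=\sqrt{K-R_{1212}}\ge 0$ (both real since $K>0$ and $\det h_3'=K-R_{1212}>0$), the elementary inequality $(u-v)^2\le|u-v|\,|u+v|=|u^2-v^2|=|R_{1212}|$ gives $\det(h_3-h_3')\le|R_{1212}|$. Hence
\[
B=\det(h_3-h_3')+R_{1212}\le|R_{1212}|+R_{1212}=2R_{1212}^+ ,
\]
since $|x|+x=2\max\{x,0\}$. Combining the pointwise estimate $B\le 2R_{1212}^+$ with $X\cdot e_3\ge 0$, multiplying and integrating, and retaining the unchanged term $-R^{ab}_{\;\;\;\;a3}(X\cdot e_b)$, produces the asserted upper bound for $m_{LY}(\Sigma)$.
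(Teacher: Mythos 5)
Your proposal is correct, and its overall skeleton coincides with the paper's: start from Lemma \ref{Quasi_local_mass_as_curvature}, use $\operatorname{tr} h_4'=0$ together with $\alpha_{e_3'}=0$, ${R^{ab}}_{a4}=0$ and the Codazzi equation to conclude that $h_4'$ is a transverse traceless tensor on a topological sphere and hence vanishes, then feed $h_4'=0$ into the two Gauss equations and prove the pointwise bound $2\det(h_3)-\operatorname{tr}h_3\operatorname{tr}h_3'+h_3\cdot h_3'\le 2R_{1212}^+$. Where you genuinely diverge is in the proof of this last, central inequality. The paper diagonalizes $h_3$, writes the quantity as $2ab-a'b-ab'$, and runs a two-case analysis ($R_{1212}\le 0$ versus $R_{1212}>0$), the second case requiring the scaling trick with the constant $C>1$ defined by $(K-R_{1212})C^2=K$. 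You instead use the $2\times 2$ polarization identity to rewrite the bracket as $\det(h_3-h_3')+R_{1212}$, bound $\det(h_3-h_3')\le\bigl(\sqrt{\det h_3}-\sqrt{\det h_3'}\bigr)^2$ via Alexandrov's mixed-discriminant inequality for the positive definite pair $(h_3,h_3')$, and finish with the elementary estimate $(u-v)^2\le|u^2-v^2|$ for $u,v\ge 0$; this handles both signs of $R_{1212}$ uniformly, avoids the case split entirely, and has the conceptual merit of exhibiting the quantity as $\det(h_3-h_3')+R_{1212}$, which ties the argument directly to the classical Cohn--Vossen rigidity scheme sketched in the paper's introduction. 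A further small improvement on your side: you explicitly justify $X\cdot e_3>0$ (convexity of the Weyl embedding plus a translation of the origin, which changes neither $m_{LY}$ nor the identity of Lemma \ref{Quasi_local_mass_as_curvature}), a positivity that the paper uses implicitly when multiplying the pointwise bound by $X\cdot e_3$ and integrating.
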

\begin{proof}
We use the Gauss equation 
\[
K=  det (h'_3) - det(h'_4) + R_{1212}
\]
and the Codazzi equations
\[
\begin{split}
\nabla_a (h'_3)_{bc} -\nabla_b (h'_3)_{ac}=&R_{abc3}+ (\alpha_{e'_3})_b (h'_4)_{ac} -(\alpha_{e'_3})_a (h'_4)_{bc}\\
\nabla_a (h'_4)_{bc} -\nabla_b (h'_4)_{ac}=&R_{abc4}+ (\alpha_{e'_3})_b (h'_3)_{ac} -(\alpha_{e'_3})_a (h'_3)_{bc}
\end{split}
\]
 of the surface $\Sigma$ in $N$. By our assumption, $ \alpha_{e'_3} = 0$ and ${R^{ab}}_{a4} =0$. As $tr h_4'=0$, the last Codazzi equation gives
\[   \nabla^c (h'_4)_{bc}  = \nabla^c (h'_4)_{bc}  - \nabla_b tr h_4' = 0.\]
This implies $h'_4=0$ since it is  traceless and symmetric. The Gauss equation simplifies to
\[ K=  det (h'_3) + R_{1212}. \]
We estimate $2 det(h_3) -tr h_3 tr h_3' +h_3 \cdot h_3' $, keeping in mind that  $h_3 $ and $h_3'$ are both positive definite. At each point, we diagonalize $h_3$ and assume
\[ h_3= \left( \begin{array}{cc}
a & 0  \\
0 & b \\ 
\end{array} \right)   \qquad  h'_3= \left( \begin{array}{cc}
a' & c'  \\
c' & b' \\ 
\end{array} \right)  .\] 
We compute 
\[2 det(h_3) -tr h_3 tr h_3' +h_3 \cdot h_3'= 2 ab - a'b -ab' .\]
The Gauss equations of the surface $\Sigma$ in $N$ and $X(\Sigma)$ in $\R^3$ read
\[
K= ab  \quad, \quad K =a'b'- (c')^2 + R_{1212}.
\]
We claim that 
\begin{enumerate}
\item For the points where $R_{1212} \le  0$, we have $2 ab - a'b -ab'\le 0 $ 
\item For the points where $R_{1212} > 0$, we have $2 ab - a'b -ab'\le 2 R_{1212}. $ 
\end{enumerate}
The first claim is easy.  If $R_{1212} \le 0$, then $a'b' \ge K$ and \[ a'b +ab' \ge a'b  +\frac{K^2}{a'b} \ge 2K. \]
For the second case, we have
\[a'b' > K - R_{1212}.\]
Let $C>1 $ be the constant such that 
\[ (K - R_{1212}) C^2 = K.   \]
We have $(Ca')(Cb') \ge  K$ and
\[ \begin{split} 
2 ab - a'b -ab'  \le 2 K -\frac{2K}{C} \le 2 R_{1212}. 
\end{split}
 \]
\end{proof}
From the above upper bound for the Liu-Yau quasi-local mass,  we obtain the following rigidity theorem characterizing the Minkowski spacetime.
\begin{theorem} \label{rigidity_Minkowski}
Let $\Sigma$ be a surface in a spacetime $N$ satisfying the dominant energy condition. Suppose $\Sigma$ bounds a spacelike hypersurface $M$. Let $e'_3 =-\frac{H}{|H|}$ and $e'_4 = \frac{J}{|H|}$. Suppose $\alpha_{e'_3}=0$, ${R^{ab}}_{a4} =0$, $\nabla_b {R^{ab}}_{a3} = 0$, and $R_{1212} \le 0$ on $\Sigma$. Finally, assume the second fundamental form in the direction of $e_3$ is positive definite and the Gauss curvature of $\Sigma$ is positive. Then the domain of dependence of $M$ is isometric to a open set in $\R^{3,1}$.
\end{theorem}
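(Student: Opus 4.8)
The plan is to combine the curvature upper bound of Theorem \ref{upper_bound_curvature_minkowski} with the positivity and rigidity of the Liu--Yau mass from \cite{Liu-Yau,Shi-Tam}. All the hypotheses of Theorem \ref{upper_bound_curvature_minkowski} ($\alpha_{e'_3}=0$, ${R^{ab}}_{a4}=0$, $h'_3$ positive definite, and positive Gauss curvature, the last of which also guarantees a unique isometric embedding $X$ of $\Sigma$ into $\R^3$ by Weyl's theorem) are assumed here, so I would start from
\[ m_{LY}(\Sigma)\le \frac{1}{8\pi}\int \{2R_{1212}^+(X\cdot e_3)-{R^{ab}}_{a3}(X\cdot e_b)\}\,d\Sigma. \]
The objective is to show the right-hand side is nonpositive, so that together with the positive mass theorem it forces $m_{LY}(\Sigma)=0$.

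I would then dispose of the two terms separately. The assumption $R_{1212}\le 0$ gives $R_{1212}^+\equiv 0$ on $\Sigma$, so the first term vanishes identically. For the second term, the key observation is that the tangential components of the position vector form a gradient: writing $f=\tfrac{1}{2}|X|^2$ on $\Sigma$ and using $\nabla_a X=e_a$ for the Euclidean position vector, one has $X\cdot e_b=\langle X,e_b\rangle=\nabla_b f$. Setting $W^b:={R^{ab}}_{a3}$, the remaining contribution is $-\tfrac{1}{8\pi}\int W^b\nabla_b f\,d\Sigma$. Since $\Sigma$ is closed, integration by parts transfers the derivative onto $W$, and the hypothesis $\nabla_b{R^{ab}}_{a3}=0$, i.e. $\mathrm{div}\,W=0$, makes this integral vanish. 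Hence $m_{LY}(\Sigma)\le 0$.

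Combined with the positivity $m_{LY}(\Sigma)\ge 0$ guaranteed by the dominant energy condition and the fact that $\Sigma$ bounds the spacelike hypersurface $M$ \cite{Liu-Yau,Shi-Tam}, this yields $m_{LY}(\Sigma)=0$. The conclusion then follows from the rigidity (equality) case of the positive mass theorem: vanishing Liu--Yau mass forces the data on $M$ to coincide with flat data, so the domain of dependence of $M$ is isometric to an open subset of $\R^{3,1}$.

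I expect the genuine difficulty to lie entirely in this last step. The inequality $m_{LY}(\Sigma)\le 0$ is obtained by the elementary pointwise estimate and integration by parts above, but the equality case is delicate: the Liu--Yau mass is known to be strictly positive for many surfaces even in flat space, so $m_{LY}(\Sigma)=0$ is a rigid condition, and upgrading it to flatness of the full domain of dependence (rather than merely of a single time slice) requires the sharp rigidity statement accompanying the positive mass theorem --- for instance via the Shi--Tam quasi-spherical foliation and its monotonicity, which degenerates precisely to the flat model exactly when the mass vanishes.
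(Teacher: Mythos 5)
Your proposal is correct and follows essentially the same route as the paper: the identity $X\cdot e_b=\nabla_b\bigl(\tfrac12|X|^2\bigr)$ plus integration by parts against the divergence-free field ${R^{ab}}_{a3}$, the vanishing of $R_{1212}^+$ from $R_{1212}\le 0$, and then the positivity and rigidity of the Liu--Yau mass \cite{Liu-Yau}. The paper likewise treats the rigidity step as a direct citation rather than re-deriving it, so your closing remarks about the equality case go slightly beyond what the paper itself records but introduce no discrepancy.
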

\begin{proof}
We have
\[ 
 (X \cdot e_2) e_2+ (X \cdot e_1) e_1= \frac{1}{2}\nabla  |X|^2.
\]
It follows that
 \[ \int  [ {R^{ab}}_{a3} (X \cdot e_b) ] d  \Sigma = - \frac{1}{2}\int [|X|^2  \nabla_b {R^{ab}}_{a3}]d\Sigma      = 0. \]
We conclude that 
\[ 
m_{LY}(\Sigma) \le \frac{1}{8 \pi} \int  [2R_{1212}^+ (X \cdot e_3)  -{R^{ab}}_{ a3} (X \cdot e_b)] d  \Sigma = 0.
 \] 
 The theorem follows from the positivity and rigidity of the Liu-Yau quasi-local mass \cite{Liu-Yau}. 
\end{proof}
As corollaries of the above theorem, we have the following two statements about time-symmetric initial data $(M,\bar g)$. Let $\bar R_{ijkl}$ and $\bar Ric_{ij}$ be the Riemannian curvature and Ricci curvature tensor of $g$, respectively. First, we get the following upper bound of the Brown-York quasi-local mass $m_{BY}$.
\begin{corollary} \label{upper_bound_Brown-York}
Let $\Sigma$ be a convex surface in a time-symmetric hypersurface $(M,\bar g)$ with positive Gauss curvature. We have the following upper bound for its Brown-York quasi-local mass.
\begin{equation} \label{formulae_brown-York}
\begin{split}
m_{BY}(\Sigma)= &  \frac{1}{8 \pi} \int \{ [2 det(h_3) - tr h_3 tr h_3' + h_3 \cdot h_3'  ]  (X \cdot e_3)- {{\bar R}^{ab}}_{\,\,\,\,\,\,\, a3} (X \cdot e_b)\} d  \Sigma.  \\
\le & \frac{1}{8 \pi} \int \{ 2 \bar R_{1212}^+(X \cdot e_3)d  \Sigma + \frac{1}{16 \pi} \int|X|^2 \nabla_b  {{\bar R}^{ab}}_{\,\,\,\,\,\,\, a3}\} d\Sigma.
\end{split}
 \end{equation}
 where $\bar R_{1212} ^+= \max \{ \bar R_{1212}, 0\}$.
\end{corollary}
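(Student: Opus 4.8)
The plan is to obtain Corollary~\ref{upper_bound_Brown-York} as the time-symmetric specialization of Lemma~\ref{Quasi_local_mass_as_curvature} and Theorem~\ref{upper_bound_curvature_minkowski}, in which the spacetime curvature $R$ of $N$ collapses onto the intrinsic curvature $\bar R$ of $M$. First I would record the geometric content of time symmetry. Taking $e_4'=\frac{\partial}{\partial t}$ to be the timelike unit normal of $M$ in $N$ and $e_3'$ the outward unit normal of $\Sigma$ in $M$, the vanishing of the second fundamental form of $M$ in $N$ gives at once $h_4'=0$ for the surface, $\alpha_{e_3'}(\cdot)=\langle \nabla^N_{(\cdot)}e_3',e_4'\rangle=0$, and hence $\alpha_H=0$. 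These are precisely the standing hypotheses $\alpha_H=0$ of Lemma~\ref{Quasi_local_mass_as_curvature} and $\alpha_{e_3'}=0$ of Theorem~\ref{upper_bound_curvature_minkowski}. Moreover the Codazzi equation of $M$ in $N$ forces $R_{abc4}=0$ for all indices tangent to $M$, so ${R^{ab}}_{a4}=0$, the remaining curvature hypothesis. Because $\Sigma$ lies in the totally geodesic slice, the Lorentzian norm of its mean curvature vector equals its mean curvature in $M$, so that $E(\Sigma,X,\frac{\partial}{\partial t})=m_{LY}(\Sigma)=m_{BY}(\Sigma)$ for time-symmetric data.

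The second ingredient is to identify the curvature terms through the Gauss equation of the totally geodesic slice: for all indices tangent to $M$ one has $R_{ijkl}=\bar R_{ijkl}$, so in particular $R_{1212}=\bar R_{1212}$ and ${R^{ab}}_{a3}=\bar R^{ab}_{\;\;a3}$ (the index $3$ being tangent to $M$). Substituting these identities into Lemma~\ref{Quasi_local_mass_as_curvature} produces the exact formula for $m_{BY}(\Sigma)$ displayed in the first line of \eqref{formulae_brown-York}.

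For the inequality I would invoke the pointwise estimate already established in the proof of Theorem~\ref{upper_bound_curvature_minkowski}: convexity of $\Sigma$ in $M$ makes $h_3'$ positive definite, while the positive Gauss curvature of the induced metric guarantees, through the Weyl embedding theorem, that $X(\Sigma)\subset\R^3$ is convex so that $h_3$ is positive definite; the eigenvalue argument then yields $2\,det(h_3)-tr\,h_3\,tr\,h_3'+h_3\cdot h_3'\le 2\bar R_{1212}^+$. Since $X\cdot e_3\ge 0$ for the convex reference surface with origin chosen inside, multiplying by $X\cdot e_3$ and integrating bounds the first term of the formula by $\frac{1}{8\pi}\int 2\bar R_{1212}^+(X\cdot e_3)\,d\Sigma$. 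Finally, I would treat the remaining term $-\frac{1}{8\pi}\int \bar R^{ab}_{\;\;a3}(X\cdot e_b)\,d\Sigma$ exactly as in the proof of Theorem~\ref{rigidity_Minkowski}: using $(X\cdot e_1)e_1+(X\cdot e_2)e_2=\frac{1}{2}\nabla|X|^2$ and integrating by parts over the closed surface converts it into $\frac{1}{16\pi}\int |X|^2\nabla_b\bar R^{ab}_{\;\;a3}\,d\Sigma$, assembling the second line of \eqref{formulae_brown-York}.

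The step I expect to demand the most care is the bookkeeping that reduces the spacetime quantities to intrinsic ones: one must check that every curvature component and every tangential/normal projection entering Lemma~\ref{Quasi_local_mass_as_curvature} has all of its free indices tangent to $M$ before the totally-geodesic Gauss equation is applied, and that the positivity $X\cdot e_3\ge 0$ used in the estimate is genuinely available from the convexity of the Weyl embedding. The analytic heart, namely the eigenvalue estimate and the integration by parts, is supplied verbatim by the earlier results, so the residual work is essentially verifying the hypotheses and substituting $\bar R$ for $R$.
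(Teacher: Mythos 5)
Your proposal is correct and takes essentially the same route as the paper: the paper's proof simply realizes $(M,\bar g)$ as the totally geodesic $t=0$ slice of the static product spacetime $g=-dt^2+\bar g$, so that $R_{ijkl}=\bar R_{ijkl}$ and $R_{ijk0}=0$ hold immediately, and then invokes Lemma \ref{Quasi_local_mass_as_curvature} and Theorem \ref{upper_bound_curvature_minkowski}, exactly the reduction you carry out via the Gauss--Codazzi equations for a totally geodesic slice. The extra details you supply---verifying $\alpha_{e'_3}=0$ and ${R^{ab}}_{a4}=0$, identifying $m_{LY}(\Sigma)=m_{BY}(\Sigma)$ for time-symmetric data, checking positivity of $h_3$, $h'_3$ and $X\cdot e_3$, and the integration by parts turning $-\int {{\bar R}^{ab}}_{\;\;a3}(X\cdot e_b)\,d\Sigma$ into $\frac{1}{2}\int |X|^2\nabla_b {{\bar R}^{ab}}_{\;\;a3}\,d\Sigma$---are precisely the steps the paper leaves implicit.
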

\begin{proof}
Consider the static spacetime $N$ with the metric 
\[
g =  - dt^2 + \bar g 
\]
We have
\[  R_{ijkl} = \bar R_{ijkl} \]
and
\[
R_{ijk 0} = 0.
\]
The corollary follows from Lemma  \ref{Quasi_local_mass_as_curvature} and Theorem \ref{upper_bound_curvature_minkowski}.
\end{proof}
We also get the following rigidity theorem:
\begin{theorem}\label{rigidity_R3}
Let $(M,\bar g)$  be a 3-manifold with boundary $\Sigma$. Assume that  the scalar curvature of $\bar g$ is non-negative and $\Sigma$ is a convex 2-sphere with positive Gauss curvature. If $\nabla_b  {{\bar R}^{ab}}_{\,\,\,\,\,\,\, a3} = 0$ and $\bar R_{1212} \le 0$ on $\Sigma$. Then $\bar g$ is the flat metric. 
\end{theorem}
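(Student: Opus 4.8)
The plan is to combine the upper bound for the Brown--York mass from Corollary \ref{upper_bound_Brown-York} with the positivity and rigidity of the Brown--York mass due to Shi and Tam \cite{Shi-Tam}. First I would realize $(M,\bar g)$ as the time-symmetric slice of the static spacetime $N$ with metric $-dt^2 + \bar g$, exactly as in the proof of Corollary \ref{upper_bound_Brown-York}. This gives $R_{ijkl} = \bar R_{ijkl}$ and $R_{ijk0} = 0$, so that the ambient curvature conditions needed to apply the upper bound (in particular ${R^{ab}}_{a4}=0$ and $\alpha_{e'_3}=0$) hold automatically and the remaining hypotheses reduce to conditions on $\bar R$. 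Since $\Sigma$ is a convex $2$-sphere with positive Gauss curvature, the Weyl embedding theorem furnishes the unique isometric embedding $X$ of $\Sigma$ into $\R^3$ that serves as the reference defining $m_{BY}(\Sigma)$.

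Next I would observe that the two curvature hypotheses are precisely what is required to annihilate the right-hand side of Corollary \ref{upper_bound_Brown-York}. Because $\bar R_{1212} \le 0$ on $\Sigma$, the term $\bar R_{1212}^+ = \max\{\bar R_{1212},0\}$ vanishes identically; and because $\nabla_b {{\bar R}^{ab}}_{\,\,\,\,\,\,\, a3} = 0$, the boundary integral $\int |X|^2 \nabla_b {{\bar R}^{ab}}_{\,\,\,\,\,\,\, a3}\, d\Sigma$ vanishes as well. Here I would reuse the identity $(X\cdot e_1)e_1 + (X\cdot e_2)e_2 = \frac{1}{2}\nabla |X|^2$ together with integration by parts, exactly as in the proof of Theorem \ref{rigidity_Minkowski}. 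This produces the one-sided bound $m_{BY}(\Sigma) \le 0$.

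Finally I would invoke the Shi--Tam theorem \cite{Shi-Tam}: under the assumption that the scalar curvature of $\bar g$ is non-negative, together with the convexity and positive Gauss curvature of the boundary $\Sigma$, the Brown--York mass satisfies $m_{BY}(\Sigma) \ge 0$. Combining the two inequalities forces $m_{BY}(\Sigma) = 0$, and the equality case of \cite{Shi-Tam} then identifies $(M,\bar g)$ with a domain in Euclidean $\R^3$, so that $\bar g$ is flat.

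I expect the genuine content to reside entirely in this last step; everything preceding it is a direct translation of the hypotheses through Corollary \ref{upper_bound_Brown-York}. The main obstacle is therefore to verify that the convexity and positive Gauss curvature assumptions are exactly the hypotheses under which the rigidity case of the Shi--Tam monotonicity applies, so that the vanishing of the Brown--York mass truly forces global flatness of $(M,\bar g)$ rather than merely a condition along $\Sigma$.
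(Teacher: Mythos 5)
Your proposal is correct and follows exactly the paper's own argument: apply Corollary \ref{upper_bound_Brown-York} (whose two right-hand terms vanish under the hypotheses $\bar R_{1212}\le 0$ and $\nabla_b {{\bar R}^{ab}}_{\,\,\,\,\,\,\, a3}=0$) to conclude $m_{BY}(\Sigma)\le 0$, then invoke the positivity and rigidity of the Brown--York mass to conclude flatness. The paper's proof is just a terser version of the same two steps, so there is nothing to add.
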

\begin{proof}
It follows from Corollary \ref{upper_bound_Brown-York} that 
\[
m_{BY} (\Sigma) \le 0. 
\] 
The theorem follows from the the positivity and rigidity of the Brown-York mass. 
\end{proof}
For asymptotically flat initial data sets, it is known that the limit of the Brown-York-Liu-Yau quasi-local mass of the coordinate spheres recovers the ADM mass of the initial data \cite{fst}.
In the general relativity literature (see Ashtekar--Hansen \cite{AH}, Chru\'{s}ciel \cite{Chrusciel} and Schoen \cite{Schoen}),
it is also known that the ADM mass can be computed using the Ricci curvature of the induced metric of the initial data. As a corollary of Lemma 4.1, we obtain a simple proof that the limit of the Brown-York-Liu-Yau quasi-local mass coincides with the ADM mass via the Ricci curvature (see also the earlier proof by Miao--Tam--Xie in  \cite{Miao-Tam-Xie}).
\begin{definition}
$(M^3, \bar g)$ is an {\it asymptotically flat} manifold of order $\tau$ if, outside a compact set,
$M^3$ is diffeomorphic to  $ \R^3  \setminus \{ | x | \le r_0 \}$ for some $r_0>0$
and under the diffeomorphism, we have
\[
\bar g_{ij} - \delta_{ij} = O ( | x |^{- \tau} ) ,  \partial \bar g_{ij} = O ( |x|^{-1-\tau}), \  \partial^2 \bar g_{ij} = O (|x|^{-2 - \tau} )
\]
for some $\tau>\frac {1}{2}$. Here $ \partial $ denotes the partial differentiation on $ \R^3$. 
\end{definition}

\begin{theorem}\label{limit_rewrite}
Suppose we have an asymptotically manifold of order $\alpha> \frac{1}{2}$ and $\Sigma_r$ be the coordinate spheres of the asymptotically flat coordinates. We have
\begin{equation}
\lim_{r \to \infty} \int [ H_0 - H + (\bar Ric - \frac{1}{2} \bar R  \bar g)(X, e_3) ] d \Sigma_r = 0.
\end{equation}
Here $X$ denote the position vector of $\R^3$ which is identified with a vector field along the surface $\Sigma_r$ on the initial data set via the canonical gauge of the isometric embedding.
\end{theorem}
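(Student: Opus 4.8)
The plan is to use the exact Brown--York identity of Lemma \ref{Quasi_local_mass_as_curvature} to trade the mean-curvature difference for curvature and second-fundamental-form data, then to perform a pointwise algebraic cancellation, and finally to close with the standard decay estimates for large coordinate spheres. For $r$ large the induced metric on $\Sigma_r$ is $C^2$-close to the round metric of radius $r$, so $\Sigma_r$ is convex with positive Gauss curvature; since the data is time-symmetric, the static spacetime $N=-dt^2+\bar g$ satisfies $\alpha_H=0$ along $\Sigma_r$, and Lemma \ref{Quasi_local_mass_as_curvature} applies. Writing $h_3$ for the second fundamental form of the isometric image $X(\Sigma_r)\subset\R^3$ and $h_3'$ for that of $\Sigma_r$ in $(M,\bar g)$, and recalling $m_{BY}(\Sigma_r)=\frac{1}{8\pi}\int_{\Sigma_r}(H_0-H)\,d\Sigma_r$, the lemma gives the pointwise-exact identity
\[
\int_{\Sigma_r}(H_0-H)\,d\Sigma_r=\int_{\Sigma_r}\Big\{[\,2\det(h_3)-tr\,h_3\,tr\,h_3'+h_3\cdot h_3'\,](X\cdot e_3)-\bar R^{ab}_{\;\;\;\;a3}(X\cdot e_b)\Big\}\,d\Sigma_r.
\]

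Next I would carry out the pointwise algebra combining the curvature terms. Decomposing $X=X^{\top}+(X\cdot e_3)e_3$ and using that the tangential contraction of the Riemann tensor is Ricci, $\bar R^{ab}_{\;\;\;\;a3}=\bar Ric^{b}_{\ 3}$ (the normal--normal piece dropping out), the tangential part of the Einstein term $(\bar Ric-\frac{1}{2}\bar R\,\bar g)(X,e_3)$ cancels the term $-\bar R^{ab}_{\;\;\;\;a3}(X\cdot e_b)$ above, while its normal part contributes $(X\cdot e_3)(\bar Ric_{33}-\frac{1}{2}\bar R)=-(X\cdot e_3)\bar R_{1212}$ by the three-dimensional identity $\bar Ric_{33}-\frac{1}{2}\bar R=-\bar R_{1212}$. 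Then the Gauss equations $\det(h_3)=K$ (flatness of $\R^3$) and $K=\det(h_3')+\bar R_{1212}$ (the surface in $M$, where $h_4'=0$), combined with the $2\times2$ polarization identity $\det(A-B)=\det A+\det B-tr A\,tr B+tr(AB)$, reduce the remaining bracket to $\det(h_3-h_3')$. Altogether
\[
\int_{\Sigma_r}\big[(H_0-H)+(\bar Ric-\tfrac{1}{2}\bar R\,\bar g)(X,e_3)\big]\,d\Sigma_r=\int_{\Sigma_r}(X\cdot e_3)\,\det(h_3-h_3')\,d\Sigma_r.
\]

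It then remains to show the right-hand side tends to $0$, which is a matter of counting orders. On $\Sigma_r$ one has $X\cdot e_3=O(r)$ and $\mathrm{Area}(\Sigma_r)=O(r^2)$, while both shape operators are $\frac{1}{r}\,\mathrm{Id}+O(r^{-1-\alpha})$: for $h_3'$ this is immediate from $\bar g_{ij}=\delta_{ij}+O(r^{-\alpha})$ together with $\partial\bar g=O(r^{-1-\alpha})$, and for the reference $h_3$ it follows from the asymptotics of the Weyl isometric embedding of near-round spheres. Hence $h_3-h_3'=O(r^{-1-\alpha})$, so $\det(h_3-h_3')=O(r^{-2-2\alpha})$ and the integral is $O(r^{1-2\alpha})$, which vanishes in the limit precisely because $\alpha>\frac{1}{2}$.

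The main obstacle is the sharp control of the reference second fundamental form $h_3$: one must show that the second fundamental form of the solution to the isometric embedding problem for $(\Sigma_r,\sigma)$ into $\R^3$ agrees with $\frac{1}{r}\,\mathrm{Id}$ to order $r^{-1-\alpha}$. This is the analytic heart of the argument, available from the Shi--Tam--type asymptotic analysis of large coordinate spheres; every other step is pointwise algebra or elementary decay counting. Notably, because the reduced integrand is \emph{quadratic} in $h_3-h_3'$, no delicate cancellation among the individually borderline-integrable mean-curvature terms is needed, and the threshold $\alpha>\frac{1}{2}$ emerges directly from the order count.
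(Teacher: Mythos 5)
Your proposal is correct and follows essentially the same route as the paper's proof: both start from the Brown--York identity of Lemma \ref{Quasi_local_mass_as_curvature} (via Corollary \ref{upper_bound_Brown-York} in the static spacetime $-dt^2+\bar g$), use the $2\times 2$ polarization identity and the Gauss equations to reduce the integrand to $\det(h_3-h_3')\,(X\cdot e_3)$, and conclude by the decay count $O(r)\cdot O(r^{-2-2\alpha})\cdot O(r^2)=O(r^{1-2\alpha})\to 0$ for $\alpha>\tfrac12$. Your write-up in fact makes explicit two points the paper leaves implicit (convexity of large coordinate spheres and the asymptotics $h_3=\tfrac1r\,\mathrm{Id}+O(r^{-1-\alpha})$ of the Weyl embedding), but the argument is the same.
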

\begin{proof}
Using \eqref{formulae_brown-York}, we have
\[
\begin{split}
    &\int ( H_0 - H )d\Sigma_r \\
 = &\int \{ [2 det(h_3) - tr h_3 tr h_3' + h_3 \cdot h_3'  ]  (X \cdot e_3)-\bar Ric (e_3 , e_a) X \cdot e_a \} d  \Sigma_r\\
 = & \int \{ det(h_3 -h_3')  (X \cdot e_3) + [det(h_3) - det(h_3') ] (X \cdot e_3)  -\bar Ric (e_3 , e_a) X \cdot e_a \} d  \Sigma_r\\
 = & \int \{ det(h_3 -h_3')  (X \cdot e_3) + [ \frac{\bar R}{2} - \bar Ric(e_3,e_3) ] (X \cdot e_3)  -\bar Ric (e_3 , e_a) X \cdot e_a \}d  \Sigma_r \\
 = & \int \{ det(h_3 -h_3') (X \cdot e_3) + [ \frac{1}{2} \bar R \bar g - \bar Ric ] (X , e_3)  \} d  \Sigma_r 
 \end{split}
\]
For  an asymptotically flat initial data set of order $\tau> \frac{1}{2}$, it is straight-forward to check that $X \cdot e_3 = O(r)$ and 
\[
det(h_3 -h_3')   = O(r^{-2 \tau -2}).
\]
\end{proof}
\section{Quasi-local mass with reference to the anti-de Sitter spacetime}
The anti-de Sitter spacetime,
\[  -(1+r^2) dt^2 + \frac{dr^2}{1+r^2} + r^2 dS^2, \]
also admits the Killing-Yano 2-form $Q= r \frac{\partial}{\partial r}\wedge  \frac{\partial}{\partial t}$. In the following, we derive the analogue of Theorem \ref{Minkowski_compare}
 with respect to the anti-de Sitter spacetime.
 \begin{theorem} \label{anti_de_Sitter_comparison}
Given a  spacelike 2-surface $\Sigma$  in $N$ and a frame $\{ e'_3, e'_4\}$ of the normal bundle and let $X$ be an isometric embedding of $\Sigma$ into the Anti-de Sitter spacetime. Suppose there is a frame $\{ e_3, e_4\}$ of the normal bundle of $X(\Sigma)$ such that
\[   \alpha_{e'_3}=\alpha_{e_3}.\]
Then we have
\begin{equation} \label{eq_comparison_ads_2}
\begin{split}
   & \int [-\langle \frac{\partial}{\partial t} , e_4 \rangle(tr h_3 -tr h'_3) +\langle \frac{\partial}{\partial t} , e_3 \rangle(tr h_4 -tr h'_4)] d  \Sigma \\
=&  \int [2 det(h_3) -2det(h_4) - tr h_3 tr h_3' +h_3 \cdot h_3' +tr h_4 tr h_4' - h_4 \cdot h_4' ] Q_{34}d  \Sigma \\
 &+\int  [ R^{ab}_{\;\;\;\; a4} Q_{b3}-R^{ab}_{\;\;\;\; a3}Q_{b4} - Q_{bc} \sigma^{cd}[(h_{3})_{da}  h_4^{'ab}-(h_{4})_{da}  h_3^{'ab} ] ] d  \Sigma \\
\end{split}  
\end{equation}
where $R$ is the curvature tensor for the spacetime $N$. \end{theorem}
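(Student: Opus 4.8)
The plan is to run the proof of Theorem~\ref{Minkowski_compare} essentially verbatim, exploiting the fact that the anti-de Sitter spacetime carries the \emph{same} conformal Killing--Yano 2-form $Q = r\frac{\partial}{\partial r}\wedge\frac{\partial}{\partial t}$, with $div\, Q = -3\frac{\partial}{\partial t}$, as the Minkowski reference. Concretely, I would again form the two divergence quantities on $\Sigma$ built from the Newton tensors $tr(h_3)\sigma^{ab}-h_3^{ab}$ and $tr(h'_3)\sigma^{ab}-h_3^{'ab}$ (together with their $e_4$ counterparts) contracted against $Q$, and take their difference. The second of these involves only the physical spacetime $N$: it uses the Codazzi equations \eqref{1-Minkowski formula: Codazzi equation} for $h'_3, h'_4$ in $N$ (which produce the physical curvature $R$), the decomposition \eqref{1-Minkowski formula: nabla Q} of $\nabla Q$ into its tangential part plus reference second-fundamental-form corrections, and the defining identity of the conformal Killing--Yano form applied with $\xi = div\, Q$. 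Since $Q$ and $\xi$ are identical to the Minkowski case, this computation reproduces \eqref{divergence_two}, and hence \eqref{divergence_two_int}, with no change whatsoever.

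The one step that genuinely requires attention is the first divergence identity \eqref{divergence_one_int}. For the Minkowski reference it followed at once from \cite[Theorem 4.3]{Wang-Wang-Zhang} because the ambient curvature vanishes; for the anti-de Sitter reference the same theorem produces three extra terms, $\mathring{R}^{ab}_{\;\;a3}Q_{b4} - \mathring{R}^{ab}_{\;\;a4}Q_{b3} + \mathring{R}^{ab}_{\;\;43}Q_{ab}$, where $\mathring{R}$ is the curvature tensor of the anti-de Sitter spacetime. I would show all three vanish. The decisive observation is that the anti-de Sitter spacetime has constant curvature, so $\mathring{R}_{\alpha\beta\gamma\delta} = -(g_{\alpha\gamma}g_{\beta\delta} - g_{\alpha\delta}g_{\beta\gamma})$, and every contraction above is a mixed tangent--normal component. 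For instance $\mathring{R}_{cda3} = -(\sigma_{ca}g_{d3} - g_{c3}\sigma_{da}) = 0$ and $\mathring{R}_{ab43} = -(g_{a4}g_{b3} - g_{a3}g_{b4}) = 0$, since the ambient metric is block diagonal along $\Sigma$ and tangent vectors are orthogonal to $e_3, e_4$. Hence $\mathring{R}^{ab}_{\;\;a3} = \mathring{R}^{ab}_{\;\;a4} = \mathring{R}^{ab}_{\;\;43} = 0$, so \eqref{divergence_one_int} holds for the anti-de Sitter reference exactly as stated.

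With \eqref{divergence_one_int} and \eqref{divergence_two_int} both in hand, subtracting the second from the first yields \eqref{eq_comparison_ads_2}, in which only the physical curvature $R$ of $N$ survives. The main obstacle is thus not a new calculation but the recognition that the constant-curvature structure of the anti-de Sitter spacetime forces precisely the reference-curvature contractions entering the Wang--Wang--Zhang Minkowski formula to vanish; once that is verified, the comparison identity is formally identical to the flat case, which is why the statement of Theorem~\ref{anti_de_Sitter_comparison} coincides with that of Theorem~\ref{Minkowski_compare}.
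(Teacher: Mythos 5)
Your proposal is correct and is essentially the paper's own argument: the paper proves Theorem \ref{anti_de_Sitter_comparison} by repeating the proof of Theorem \ref{Minkowski_compare} and noting that, since anti-de Sitter space is a space form, the reference curvature terms never enter the formula. Your explicit verification that the space-form curvature $\mathring{R}_{\alpha\beta\gamma\delta}=-(g_{\alpha\gamma}g_{\beta\delta}-g_{\alpha\delta}g_{\beta\gamma})$ kills the mixed tangent--normal contractions $\mathring{R}^{ab}_{\;\;\;a3}$, $\mathring{R}^{ab}_{\;\;\;a4}$, $\mathring{R}^{ab}_{\;\;\;43}$ is exactly the detail the paper leaves implicit.
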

\begin{proof}
The proof is the same as that of  Theorem \ref{Minkowski_compare}. While the anti-de Sitter space is not flat, it is a space form and thus, curvature tensor for the reference space does not show up in the formula.
\end{proof}
We relate the left hand side of equation \eqref{eq_comparison_ads_2} to the quasi-local mass with the reference in the anti-de Sitter spacetime when the frame $\{ e'_3, e'_4\}$  and $\{ e_3, e_4\}$ are the canonical gauge corresponding to a pair of isometric embedding $X$ of  $\Sigma$ into the anti-de Sitter spacetime and the Killing vector field $T_0=\frac{\partial}{\partial t}$. 

The Killing vector field $T_0 $ generates a one-parameter family of isometries $\phi_t$ of the anti-de Sitter spacetime. Let $C$ be the image of $\Sigma$ under the one-parameter family $\phi_t$. The intersection of $C$ with the static slice $t=0$ is $\widehat \Sigma$. By a slight abuse of terminology, we refer to $\widehat \Sigma$ as the projection of $\Sigma$. Let $\breve e_3$ be the outward unit normal of $\widehat \Sigma$ in the static slice $t=0$. Consider the pushforward of $\breve e_3$ by the one-parameter family  $\phi_t$, which is denoted by $\breve e_3$ again. Let $\breve e_4$ be the future directed unit normal of $\Sigma$ normal to $\breve e_3$ and extend it along $C$ in the same manner. Let $\{ \bar e_3, \bar e_4\}$ be the unique frame of the normal bundle of $\Sigma$ in $N$ such that 
\[  \langle H ,  \bar e_4 \rangle =  \langle H_0 ,  \breve e_4 \rangle. \]
The quasi-local energy of $\Sigma$ with respect to the pair $(X,T_0)$ is 
\[  E(\Sigma,X,T_0)= \frac{1}{8 \pi} \int  [\langle H_0 ,  \breve e_3 \rangle \langle \frac{\partial}{\partial t}, \breve e_4 \rangle+ \alpha_{\breve e_3}(T_0^T)-\langle H ,  \bar e_3 \rangle  \langle \frac{\partial}{\partial t}, \breve e_4 \rangle - \alpha_{\bar e_3}(T_0^T)]d \Sigma. \]
Assume again that $ \alpha_{\bar e_3}= \alpha_{\breve e_3}$ and use $tr h_4 =tr h'_4$, we have
\begin{equation} \label{eq_comparison_mass_ads_2}
 E(\Sigma,X,T_0)=    \frac{1}{8 \pi}  \int [ -\langle \frac{\partial}{\partial t} ,\breve e_4 \rangle(tr h_3 -tr h'_3)] d  \Sigma \\
\end{equation}
To summarize, we have proved the following:
\begin{theorem} \label{comparison_quasilocal_mass_ads}
Given a surface $\Sigma$ in the spacetime $N$, suppose we have an isometric embedding $X$ of $\Sigma$ into the anti-de Sitter spacetime and  the Killing vector field $T_0=\frac{\partial}{\partial t}$
such that $ \alpha_{\bar e_3}= \alpha_{\breve e_3}$, then we have
\begin{equation} 
\begin{split}
   E(\Sigma,X,T_0) 
=& \frac{1}{8 \pi} \int [2 det(h_3) -2det(h_4) -tr h_3 tr h_3' +h_3 \cdot h_3' +tr h_4 tr h_4' - h_4 \cdot h_4' ] Q_{34}d  \Sigma \\
 &  +\frac{1}{8 \pi} \int  \{ R^{ab}_{\;\;\;\; a4} Q_{b3}-R^{ab}_{\;\;\;\; a3}Q_{b4} - Q_{bc} \sigma^{cd}[(h_{3})_{da}  h_4^{'ab}-(h_{4})_{da}  h_3^{'ab} ] \} d  \Sigma \\
\end{split}  
\end{equation}
\end{theorem}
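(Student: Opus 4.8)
The plan is to read the asserted identity as the specialization of Theorem \ref{anti_de_Sitter_comparison} to the canonical gauge attached to the pair $(X,T_0)$, so that the genuine content is the reduction of the left-hand side of \eqref{eq_comparison_ads_2} to $8\pi\, E(\Sigma,X,T_0)$. First I would invoke the comparison formula \eqref{eq_comparison_ads_2} with the normal frame of $X(\Sigma)$ in the anti-de Sitter reference taken to be $\{e_3,e_4\}=\{\breve e_3,\breve e_4\}$ and the normal frame of $\Sigma$ in $N$ taken to be $\{e_3',e_4'\}=\{\bar e_3,\bar e_4\}$. With these identifications the hypothesis $\alpha_{e_3'}=\alpha_{e_3}$ of Theorem \ref{anti_de_Sitter_comparison} becomes exactly the assumed $\alpha_{\bar e_3}=\alpha_{\breve e_3}$, so \eqref{eq_comparison_ads_2} applies verbatim and its right-hand side is already the right-hand side of the claim.

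It then remains to simplify the left-hand integrand $-\langle \frac{\partial}{\partial t},e_4\rangle(tr h_3-tr h_3')+\langle \frac{\partial}{\partial t},e_3\rangle(tr h_4-tr h_4')$. The decisive point is that the canonical gauge is constructed so that $\langle H,\bar e_4\rangle=\langle H_0,\breve e_4\rangle$; since these inner products are the $e_4$-components of the two mean curvature vectors, this matching condition is equivalent to $tr h_4=tr h_4'$. Hence the second term vanishes and only $-\langle \frac{\partial}{\partial t},e_4\rangle(tr h_3-tr h_3')$ survives, which is precisely the integrand appearing in \eqref{eq_comparison_mass_ads_2}.

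Finally I would identify this surviving term with the quasi-local energy. In the definition of $E(\Sigma,X,T_0)$ the two connection one-form contributions $\alpha_{\breve e_3}(T_0^T)$ and $-\alpha_{\bar e_3}(T_0^T)$ cancel under $\alpha_{\bar e_3}=\alpha_{\breve e_3}$, leaving $\frac{1}{8\pi}\int(\langle H_0,\breve e_3\rangle-\langle H,\bar e_3\rangle)\langle \frac{\partial}{\partial t},\breve e_4\rangle\, d\Sigma$. Expressing $\langle H_0,\breve e_3\rangle$ and $\langle H,\bar e_3\rangle$ in terms of the traces $tr h_3$ and $tr h_3'$, with due regard to the Lorentzian signature of the normal bundle, identifies this with $\frac{1}{8\pi}\int -\langle \frac{\partial}{\partial t},\breve e_4\rangle(tr h_3-tr h_3')\, d\Sigma$, i.e. $\frac{1}{8\pi}$ times the surviving left-hand side. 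Substituting back into \eqref{eq_comparison_ads_2} gives the theorem.

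I expect the main obstacle to lie in the sign and normalization bookkeeping of this last step: one must ensure that the passage between $\langle H_0,\breve e_3\rangle,\langle H,\bar e_3\rangle$ and the traces $tr h_3, tr h_3'$ carries the signs dictated by the timelike choice of $\breve e_4$ and by the convention defining $E$, and one must check that the tangential vector $T_0^T$ entering $\alpha_{\breve e_3}(T_0^T)$ is common to both gauges. The latter follows because both frames are built from the same projection determined by $T_0$ and because $\alpha_{\bar e_3}=\alpha_{\breve e_3}$ forces the corresponding one-form evaluations to agree. Everything else is a direct transcription of Theorem \ref{anti_de_Sitter_comparison}.
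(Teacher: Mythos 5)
Your proposal is correct and follows essentially the same route as the paper: it specializes Theorem \ref{anti_de_Sitter_comparison} to the canonical-gauge frames $\{\breve e_3,\breve e_4\}$ and $\{\bar e_3,\bar e_4\}$, uses the gauge-matching condition $\langle H,\bar e_4\rangle=\langle H_0,\breve e_4\rangle$ to get $tr\, h_4=tr\, h_4'$ and kill the second term on the left-hand side, and uses $\alpha_{\bar e_3}=\alpha_{\breve e_3}$ to cancel the connection one-form terms in the definition of $E(\Sigma,X,T_0)$, recovering exactly \eqref{eq_comparison_mass_ads_2}. The sign bookkeeping you flag is handled in the paper by the same convention you describe, so there is no gap.
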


In particular, we can rewrite the quasi-local mass with reference in the $t=0$ slice of the anti-de Sitter spacetime.
\begin{corollary} \label{Quasi_local_mass_as_curvature_ads}
Suppose $\alpha_H = 0$.  Let $X$ be the isometric embedding of the surface into the $t=0$ slice in the anti-de Sitter spacetime. We have
\begin{equation} \label{mass_hyperbolic}
   E(\Sigma,X,\frac{\partial}{\partial t}) = \frac{1}{8 \pi} \int \{ [2 det(h_3) - tr h_3 tr h_3' + h_3 \cdot h_3'  ]  (r  \frac{\partial}{\partial r}\cdot e_3)-R^{ab}_{\;\;\;\; a3} (r \frac{\partial}{\partial r} \cdot e_b) \}d  \Sigma. \\
\end{equation}
\end{corollary}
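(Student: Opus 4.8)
The plan is to mirror the proof of Lemma \ref{Quasi_local_mass_as_curvature}, replacing the Minkowski comparison identity (Theorem \ref{eq_comparison_quasilocal_mass}) by its anti-de Sitter counterpart (Theorem \ref{comparison_quasilocal_mass_ads}), and exploiting the fact that the $t=0$ slice of the anti-de Sitter spacetime is totally geodesic. First I would record the geometry of the canonical gauge for an embedding $X$ into the $t=0$ slice. Since $\langle \frac{\partial}{\partial t}, \frac{\partial}{\partial t}\rangle = -(1+r^2)$, the future-directed timelike unit normal of the slice is $\breve e_4 = \frac{1}{\sqrt{1+r^2}}\frac{\partial}{\partial t}$, while $\breve e_3$ is the outward unit normal of $\widehat\Sigma = X(\Sigma)$ inside the slice. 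Because $\{t=0\}$ is the fixed-point set of the isometry $t\mapsto -t$, it is totally geodesic; consequently the second fundamental form of $\Sigma$ in the $\breve e_4$ direction vanishes, $h_4=0$, and the mean curvature vector $H_0$ of $\Sigma$ is tangent to the slice, so $\langle H_0, \breve e_4\rangle = 0$.

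Next I would verify the hypothesis $\alpha_{\bar e_3} = \alpha_{\breve e_3}$ of Theorem \ref{comparison_quasilocal_mass_ads}, exactly as in the Minkowski case. The condition $\langle H_0, \breve e_4\rangle = 0$ forces $\langle H, \bar e_4\rangle = 0$, so that $\bar e_3 = -\frac{H}{|H|}$ and $\bar e_4 = \frac{J}{|H|}$; hence $\alpha_{\bar e_3}$ coincides with $\alpha_H$, which vanishes by assumption. On the reference side, $\alpha_{\breve e_3}(\cdot) = \langle D_{(\cdot)}\breve e_3, \breve e_4\rangle$ vanishes because $\breve e_3$ is tangent to the totally geodesic slice and $\breve e_4$ is its normal, so $D_{(\cdot)}\breve e_3$ has no $\breve e_4$-component along directions tangent to $\Sigma$. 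Thus $\alpha_{\bar e_3} = \alpha_{\breve e_3} = 0$ and Theorem \ref{comparison_quasilocal_mass_ads} applies.

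With $h_4 = 0$ the terms quadratic in $h_4$ drop out of Theorem \ref{comparison_quasilocal_mass_ads}, leaving
\[
\begin{split}
E(\Sigma,X,\tfrac{\partial}{\partial t}) =& \frac{1}{8\pi}\int [2\det(h_3) - \mathrm{tr}\,h_3\,\mathrm{tr}\,h_3' + h_3\cdot h_3']\,Q_{34}\,d\Sigma \\
&+ \frac{1}{8\pi}\int [R^{ab}_{\;\;\;\;a4}Q_{b3} - R^{ab}_{\;\;\;\;a3}Q_{b4}]\,d\Sigma.
\end{split}
\]
I would then compute the components of $Q = r\,dr\wedge dt$ in the frame $\{e_1,e_2,\breve e_3,\breve e_4\}$. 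Since $e_1,e_2,\breve e_3$ are all tangent to $\{t=0\}$, they are annihilated by $dt$, so every component of $Q$ pairing two of them vanishes: in particular $Q_{ab}=0$ and $Q_{b3}=0$. This kills the $R^{ab}_{\;\;\;\;a4}Q_{b3}$ term (and, together with $Q_{bc}=0$, the connection term that was already eliminated by $h_4=0$). The only surviving components pair a slice-tangent direction with $\breve e_4 \propto \frac{\partial}{\partial t}$, namely $Q_{34} = (r\frac{\partial}{\partial r}\cdot e_3)$ and $Q_{b4} = (r\frac{\partial}{\partial r}\cdot e_b)$. Substituting these identifications gives the stated formula \eqref{mass_hyperbolic}.

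The main obstacle is the bookkeeping in the last step: one must carry the normalization $\breve e_4 = \frac{1}{\sqrt{1+r^2}}\frac{\partial}{\partial t}$ and the factor $(1+r^2)$ relating $dr$ to $\langle \frac{\partial}{\partial r}, \cdot\rangle$ through the evaluation of $Q_{34}$ and $Q_{b4}$, in order to match the pairings $(r\frac{\partial}{\partial r}\cdot e_3)$ and $(r\frac{\partial}{\partial r}\cdot e_b)$ appearing in the statement. This is the anti-de Sitter analogue of the identities $Q_{34} = X\cdot e_3$ and $Q_{b4} = X\cdot e_b$ used in the proof of Lemma \ref{Quasi_local_mass_as_curvature}, which held automatically in the flat case because there $f\equiv 1$; here the non-flat warping factor is what requires care.
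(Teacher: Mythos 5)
Your overall route is the same as the paper's: apply Theorem \ref{comparison_quasilocal_mass_ads} to the embedding into the totally geodesic slice $\{t=0\}$, observe $h_4=0$ and $Q_{ab}=Q_{a3}=0$, and read off the surviving terms. Your verification of the gauge hypothesis (total geodesy gives $\langle H_0,\breve e_4\rangle=0$ and $\alpha_{\breve e_3}=0$, hence $\bar e_4=\frac{J}{|H|}$ and $\alpha_{\bar e_3}=\alpha_H=0$) is in fact more complete than the printed proof, which omits this step; it mirrors the argument in Lemma \ref{Quasi_local_mass_as_curvature}.

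The gap is exactly the step you defer as ``bookkeeping'': it is the only nontrivial computation in the corollary, and when one actually carries the normalization through, the identification you assert does not hold. With $Q=r\,dr\wedge dt$, $\breve e_4=\frac{1}{\sqrt{1+r^2}}\frac{\partial}{\partial t}$ and $g_{rr}=(1+r^2)^{-1}$, one finds
\begin{equation*}
Q_{34}=Q(\breve e_3,\breve e_4)=\frac{r\,dr(\breve e_3)}{\sqrt{1+r^2}}=\sqrt{1+r^2}\,\bigl\langle r\tfrac{\partial}{\partial r},\breve e_3\bigr\rangle,
\qquad
Q_{b4}=\sqrt{1+r^2}\,\bigl\langle r\tfrac{\partial}{\partial r},e_b\bigr\rangle,
\end{equation*}
so each surviving component carries an extra factor of the static potential $V=\sqrt{1+r^2}$ relative to the inner products $(r\frac{\partial}{\partial r}\cdot e_3)$ and $(r\frac{\partial}{\partial r}\cdot e_b)$ appearing in the statement. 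Equivalently, $Q_{34}=\nabla_3V$ and $Q_{b4}=\nabla_bV$, since $\nabla V=r\sqrt{1+r^2}\,\frac{\partial}{\partial r}$ on the hyperbolic slice; only in the flat case, where $V\equiv 1$, do these collapse to $X\cdot e_3$ and $X\cdot e_b$ as in Lemma \ref{Quasi_local_mass_as_curvature}. Thus the honest conclusion of your computation is the formula with $\nabla V$ (equivalently $V\,r\frac{\partial}{\partial r}$) in place of $r\frac{\partial}{\partial r}$ --- which is also what makes the integration by parts in Theorem \ref{rigidity_ads} and the comparison with the Herzlich and Miao--Tam--Xie formulas around Theorem \ref{limit_rewrite_hyperbolic} come out correctly. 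Your instinct that ``the non-flat warping factor is what requires care'' was right, but by asserting rather than performing the matching you missed that the verification fails for the formula as printed; a complete proof must either produce the factor $V$ explicitly or reinterpret the notation $r\frac{\partial}{\partial r}\cdot e_3$ as the contraction $Q_{34}$ itself.
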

\begin{proof}
For the  isometric embedding of the surface into the $t=0$ slice of the anti-de Sitter spacetime, we have $h_4=0$ and $Q_{ab} = Q_{a3} =0$. Hence
\eqref{mass_hyperbolic} follows from Theorem \ref{comparison_quasilocal_mass_ads}. 
\end{proof}
We get the following rigidity theorem:
\begin{theorem}\label{rigidity_ads}
Let $(M,\bar g)$  be a 3-manifold with boundary. Assume that  the scalar curvature $\bar R(g)$ satisfies $\bar R(g)\ge -6$, the boundary is convex and the Gauss curvature of the induced metric is bounded from below by  $-1$. Let $\bar R_{ijkl}$ be the curvature tensor of $\bar g$. If  $\nabla_b  {{\bar R}^{ab}}_{\,\,\,\,\,\,\, a3}  = 0$ and $\bar R_{1212} \le -1$ on the boundary. Then $g$ is the hyperbolic metric. 
\end{theorem}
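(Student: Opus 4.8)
The plan is to run the argument of Theorems \ref{rigidity_Minkowski} and \ref{rigidity_R3} with the hyperbolic reference furnished by Corollary \ref{Quasi_local_mass_as_curvature_ads}. First I would regard $(M,\bar g)$ as the time-symmetric slice $\{t=0\}$ of the static product spacetime $N$ with metric $g=-dt^2+\bar g$, so that, exactly as in the proof of Corollary \ref{upper_bound_Brown-York}, the spacetime curvature restricts to the intrinsic curvature: $R_{ijkl}=\bar R_{ijkl}$ and $R_{ijk0}=0$. Time-symmetry makes the slice totally geodesic, which forces $\breve e_4=\frac{\partial}{\partial t}$ and hence $\alpha_H=0$; and the hypotheses that $\Sigma$ is convex with induced Gauss curvature $\geq -1$ provide an isometric embedding $X$ of $\Sigma$ into the totally geodesic slice $\{t=0\}$ of the anti-de Sitter spacetime, i.e.\ into $\mathbb H^3$, with convex image. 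Corollary \ref{Quasi_local_mass_as_curvature_ads} then applies and writes $E(\Sigma,X,\frac{\partial}{\partial t})$ as
\[
\frac{1}{8\pi}\int\Big\{[2\det(h_3)-\mathrm{tr}\,h_3\,\mathrm{tr}\,h_3'+h_3\cdot h_3']\,\big(r\tfrac{\partial}{\partial r}\cdot e_3\big)-\bar R^{ab}_{\;\;\;\;a3}\,\big(r\tfrac{\partial}{\partial r}\cdot e_b\big)\Big\}\,d\Sigma .
\]
The whole point is to show that this is $\leq 0$ and then to invoke a positivity-and-rigidity theorem.

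For the first integrand I would follow the ``easy'' first case in the proof of Theorem \ref{upper_bound_curvature_minkowski}, diagonalizing $h_3=\mathrm{diag}(a,b)$ and writing $h_3'$ with diagonal entries $a',b'$ and off-diagonal entry $c'$, so that $2\det(h_3)-\mathrm{tr}\,h_3\,\mathrm{tr}\,h_3'+h_3\cdot h_3'=2ab-a'b-ab'$. The two Gauss equations now read $K=\det(h_3)-1=ab-1$ for the convex image in $\mathbb H^3$ (sectional curvature $-1$) and $K=\det(h_3')+\bar R_{1212}=a'b'-(c')^2+\bar R_{1212}$ for $\Sigma\subset(M,\bar g)$. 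The hypothesis $\bar R_{1212}\leq -1$ gives $a'b'\geq K+1=ab$, and since convexity makes $a,b,a',b'$ all positive, the arithmetic--geometric mean inequality yields $a'b+ab'\geq 2\sqrt{a'b'\,ab}\geq 2ab$, i.e.\ $2ab-a'b-ab'\leq 0$. Combined with the positivity of the hyperbolic support function $r\frac{\partial}{\partial r}\cdot e_3\geq 0$ for a convex surface enclosing the origin, the first integrand is pointwise $\leq 0$.

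The curvature term is killed by the integration-by-parts device of Theorem \ref{rigidity_Minkowski}, the single new ingredient being that $r\frac{\partial}{\partial r}$ is a gradient field in $\mathbb H^3$: since $\nabla^{\mathbb H^3}\big(\tfrac12\log(1+r^2)\big)=r\frac{\partial}{\partial r}$, its tangential projection along $\Sigma$ is the surface gradient, $\sum_b\big(r\frac{\partial}{\partial r}\cdot e_b\big)e_b=\nabla\big(\tfrac12\log(1+r^2)\big)$. Integrating over the closed surface and using $\nabla_b\bar R^{ab}_{\;\;\;\;a3}=0$ gives $\int \bar R^{ab}_{\;\;\;\;a3}\big(r\frac{\partial}{\partial r}\cdot e_b\big)\,d\Sigma=-\int \tfrac12\log(1+r^2)\,\nabla_b\bar R^{ab}_{\;\;\;\;a3}\,d\Sigma=0$, so $E(\Sigma,X,\frac{\partial}{\partial t})\leq 0$. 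On the other hand $\bar R(\bar g)\geq -6$ is precisely the scalar-curvature bound under which the quasi-local mass with hyperbolic reference is non-negative, with equality characterizing $\mathbb H^3$; comparing $E\leq 0$ with $E\geq 0$ forces $E=0$, whence $\bar g$ is the hyperbolic metric.

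I expect the main difficulty to lie not in any one estimate but in making the reference data line up. The sharp constants must cooperate so that one lands in the favorable case $2ab-a'b-ab'\leq 0$ (this is the role of the pair $\bar R_{1212}\leq -1$ and $K\geq -1$, the hyperbolic analogues of $R_{1212}\leq 0$ and $K>0$); one must check that the isometric embedding into $\mathbb H^3$ genuinely exists and stays convex even at the threshold $K=-1$; one must verify that the support function $r\frac{\partial}{\partial r}\cdot e_3$ can be arranged non-negative and that the conclusion is independent of the choice of center; and one must match the present hypotheses to the exact positivity-and-rigidity statement of the hyperbolic analogue of the Shi--Tam theorem being invoked.
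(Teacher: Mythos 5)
Your proposal is correct and follows essentially the same route as the paper: realize $(M,\bar g)$ as the time-symmetric slice of the static spacetime $-dt^2+\bar g$, apply Corollary \ref{Quasi_local_mass_as_curvature_ads}, kill the curvature term by integration by parts using the gradient structure of $r\frac{\partial}{\partial r}$ together with $\nabla_b{{\bar R}^{ab}}_{\,\,\,\,\,\,\,a3}=0$, estimate the algebraic term via the two Gauss equations and $\bar R_{1212}\le-1$ exactly as in the first case of Theorem \ref{upper_bound_curvature_minkowski}, and conclude with the Shi--Tam positive mass theorem. The only divergence is cosmetic: you take $\tfrac12\log(1+r^2)$ as the potential (correct for the literal hyperbolic inner product $\langle r\partial_r,e_b\rangle$, whereas the paper states $\tfrac12 r^2+\tfrac14 r^4$), and you use the AM--GM inequality in place of the paper's equivalent manipulation, neither of which changes the argument.
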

\begin{proof}
We pick an isometric embedding of $\Sigma$ into the hyperbolic space such that \[  r \frac{\partial}{\partial r}    \cdot e_3>0. \]
In view of the form the hyperbolic metric $\frac{dr^2}{1+r^2} + r^2 dS^2$ on the $t=0$ slice, it is easy to check that $r \frac{\partial}{\partial r}$ is a gradient vector field with the 
potential $\frac{1}{2} r^2+\frac{1}{4} r^4$. 
In particular, $ r \frac{\partial}{\partial r}\cdot e_b=\nabla_b(\frac{1}{2} r^2+\frac{1}{4} r^4)$ on $X(\Sigma)$. 

Let $N$ be the spacetime with metric
\[
g = -dt^2 + \bar g.
\]
We have
\[
R_{ijkl} =\bar R_{ijkl}.
\]
We apply Corollary \ref{Quasi_local_mass_as_curvature_ads} to express the quasi-local energy. By our assumption, 
\[  \int  R^{ab}_{\;\;\;\; a3} (  r \frac{\partial}{\partial r}  \cdot e_b) d  \Sigma =  0.\]
As a result, \eqref{mass_hyperbolic} implies
\[   E(\Sigma,X,\frac{\partial}{\partial t}) = \frac{1}{8 \pi} \int [2 det(h_3) - tr h_3 tr h_3' + h_3 \cdot h_3'  ]  (r  \frac{\partial}{\partial r}\cdot e_3) d  \Sigma. \\
\]
The Gauss equations read
\[
\begin{split}
K= &\bar R_{1212} + det (h_3')\\
K=& -1 + det(h_3).
\end{split}
\]
We estimate \[2 det(h_3) - tr h_3 tr h_3' + h_3 \cdot h_3'  \]
as in the proof of Theorem \ref{upper_bound_curvature_minkowski} and use  the assumption that $\bar R_{1212} \le -1$. We get
\begin{equation} 
   E(\Sigma,X,\frac{\partial}{\partial t}) \le 0.
     \end{equation}
     The theorem now follows from the positive mass theorem of \cite{Shi-Tam}.
\end{proof}
For an asymptotically hyperbolic manifold, we can use Corollary 5.3 to express the limit of the quasi-local mass with reference in the hyperbolic space in terms of the limit of Ricci curvature similar to Theorem 4.6. 
This gives a new proof of the results proved by Herzlich in \cite{Herzlich} and by Miao, Tam and Xie in \cite{Miao-Tam-Xie}. Let 
\[ g_0 = \frac{dr^2}{r^2+1} + r^2 dS^2
\] be the hyperbolic metric of the hyperbolic space $ \HH^3$ and $V=\sqrt{r^2+1}$ be the static potential.
\begin{definition}
$(M^3,\bar g)$ is an {\it asymptotically hyperbolic} manifold of order $\tau$  if, outside a compact set,
$M^3$ is diffeomorphic to  $ \HH^3  \setminus \{ | x | \le r_0 \}$ for some $r_0>0$.
Under the diffeomorphism, we have 
\[
|\bar g - g_0| = O ( | x |^{- \tau} ) , |\partial (\bar g - g_0) |  = O ( |x|^{-1-\tau}), \  \partial^2 (\bar g - g_0) = O (|x|^{-2 - \tau} )
\]
for some $\tau>\frac {3}{2}$. Here $ \partial $ denotes the partial differentiation with respect to the coordinate system of $\HH^3$ and the norm is measured with respect to $g_0$.
\end{definition}
\begin{theorem}\label{limit_rewrite_hyperbolic}
Suppose we have an asymptotically flat initial data set of order $\alpha> \frac{3}{2}$ and $\Sigma_r$ be the coordinate spheres of the asymptotically flat coordinates. We have
\begin{equation}
\lim_{r \to \infty} \int \{V ( H_0 - H) + (\bar Ric - \frac{1}{2}  (\bar R+2)\bar g)(X, e_3) \} d \Sigma_r = 0.
\end{equation}
\end{theorem}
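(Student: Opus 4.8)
The plan is to run the argument of Theorem \ref{limit_rewrite} with the hyperbolic reference in place of the Minkowski one, keeping track of the static potential $V=\sqrt{1+r^2}$ and of the curvature $-1$ of the model. I would realize the asymptotically hyperbolic manifold as the $t=0$ slice of the static spacetime $N$ with metric $g=-dt^2+\bar g$, so that $R_{ijkl}=\bar R_{ijkl}$ and $R_{ijk0}=0$, and take the position vector to be $X=r\frac{\partial}{\partial r}$. Starting from Corollary \ref{Quasi_local_mass_as_curvature_ads}, and using \eqref{eq_comparison_mass_ads_2} together with the fact that on the $t=0$ slice $\langle\frac{\partial}{\partial t},\breve e_4\rangle=-V$, so that $8\pi E(\Sigma_r,X,\frac{\partial}{\partial t})=\int V(H_0-H)\,d\Sigma_r$, I would first record the identity
\[
\int V(H_0-H)\,d\Sigma_r=\int\{[2\det(h_3)-\mathrm{tr}\,h_3\,\mathrm{tr}\,h_3'+h_3\cdot h_3'](X\cdot e_3)-{R^{ab}}_{a3}(X\cdot e_b)\}\,d\Sigma_r,
\]
where $\mathrm{tr}\,h_3=H_0$ and $\mathrm{tr}\,h_3'=H$.

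The next step is the same algebra as in the Minkowski case. The $2\times2$ identity $2\det(h_3)-\mathrm{tr}\,h_3\,\mathrm{tr}\,h_3'+h_3\cdot h_3'=\det(h_3-h_3')+\det(h_3)-\det(h_3')$ together with the two Gauss equations $K=-1+\det(h_3)$ (hyperbolic reference) and $K=\bar R_{1212}+\det(h_3')$ (physical slice) gives $\det(h_3)-\det(h_3')=1+\bar R_{1212}$, and the three-dimensional trace identity $\bar R_{1212}=\frac{\bar R}{2}-\bar{Ric}(e_3,e_3)$ converts this to $\frac{\bar R+2}{2}-\bar{Ric}(e_3,e_3)$. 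Recognizing ${R^{ab}}_{a3}(X\cdot e_b)=\bar{Ric}(e_3,e_a)(X\cdot e_a)$ and collecting the normal and tangential contributions, the curvature terms assemble into $-[\bar{Ric}-\frac{1}{2}(\bar R+2)\bar g](X,e_3)$, so that
\[
\int\{V(H_0-H)+[\bar{Ric}-\tfrac{1}{2}(\bar R+2)\bar g](X,e_3)\}\,d\Sigma_r=\int\det(h_3-h_3')\,(X\cdot e_3)\,d\Sigma_r.
\]

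It then remains to prove that the right-hand side tends to $0$. For the hyperbolic model the coordinate spheres have principal curvatures tending to $1$, so $h_3$ and $h_3'$ are $O(1)$ in an orthonormal frame, and $|X|_{g_0}^2=\frac{r^2}{1+r^2}\to1$, whence $X\cdot e_3=O(1)$. The decay $|\bar g-g_0|=O(r^{-\tau})$ and the corresponding bound on the first derivatives force the two second fundamental forms to agree to order $O(r^{-\tau})$, hence $\det(h_3-h_3')=O(r^{-2\tau})$; since the area of $\Sigma_r$ grows like $r^2$, the right-hand integral is $O(r^{2-2\tau})$, which vanishes as $r\to\infty$ because $\tau>\frac{3}{2}$. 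The hard part will be exactly this last estimate: one must control the second fundamental form $h_3$ of the isometric embedding of $(\Sigma_r,\sigma)$ into $\HH^3$ in terms of the perturbation $\bar g-g_0$ of the induced metric and its derivatives, comparing it carefully with the coordinate sphere of the model, which is the hyperbolic counterpart of the $O(r^{-2\tau-2})$ bound invoked in Theorem \ref{limit_rewrite}.
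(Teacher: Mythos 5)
Your proposal is correct and follows essentially the same route as the paper: the paper's own proof simply says to repeat the argument of Theorem \ref{limit_rewrite} with Corollary \ref{Quasi_local_mass_as_curvature_ads} in place of Corollary \ref{upper_bound_Brown-York}, with the $\bar R+2$ arising exactly as you derive it, from the Gauss equation $K=-1+\det(h_3)$ of the image in the hyperbolic slice. You in fact supply more detail than the paper does — the identification $8\pi E=\int V(H_0-H)\,d\Sigma_r$ via $\langle\frac{\partial}{\partial t},\breve e_4\rangle=-V$, the determinant identity, and the asymptotic estimates $X\cdot e_3=O(1)$, $\det(h_3-h_3')=O(r^{-2\tau})$ — all of which are consistent with the paper, which leaves the decay analysis as ``straight-forward to check.''
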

\begin{proof}
The proof is the same as Theorem 4.6 using Corollary 5.3 instead of Corollary \ref{upper_bound_Brown-York}. The resulting formula has $\bar R+2$ instead of $\bar R$ due to the curvature of the hyperbolic space when applying the Gauss equation to the image of the isometric embedding.
\end{proof}

\section{Quasi-local mass with reference to the Schwarzschild spacetime}
The Schwarzschild spacetime spacetime,
\[  - (1-\frac{2M}{r}) dt^2 + \frac{dr^2}{1-\frac{2M}{r}} + r^2 dS^2, \]
also admits the Killing-Yano 2-form $Q= r \frac{\partial}{\partial r}\wedge  \frac{\partial}{\partial t}$. In the following, we derive the analogue of Theorem \ref{Minkowski_compare}
 with respect to the Schwarzschild spacetime.
 \begin{theorem} \label{Sch_comparison}
Given a  spacelike 2-surface $\Sigma$  in $N$ and a frame $\{ e'_3, e'_4\}$ of the normal bundle and let $X$ be an isometric embedding of $\Sigma$ into the Schwarzschild spacetime. Suppose there is a frame $\{ e_3, e_4\}$ of the normal bundle of $X(\Sigma)$ such that
\[   \alpha_{e'_3}=\alpha_{e_3}.\]
Then we have
\begin{equation} \label{eq_comparison_sch_1}
\begin{split}
   & \int \{-\langle \frac{\partial}{\partial t} , e_4 \rangle(tr h_3 -tr h'_3) +\langle \frac{\partial}{\partial t} , e_3 \rangle(tr h_4 -tr h'_4) \}d  \Sigma \\
=&  \int \{ [2 det(h_3) -2det(h_4) - tr h_3 tr h_3' +h_3 \cdot h_3' +tr h_4 tr h_4' - h_4 \cdot h_4' ] Q_{34}\}d  \Sigma \\
 &+\int  \{ (R^{ab}_{\;\;\;\; a4} -{R_s^{ab}}_ {a4} )Q_{b3}-(R^{ab}_{\;\;\;\; a3}- {R_s^{ab}}_{a3} )Q_{b4} - Q_{bc} \sigma^{cd}[(h_{3})_{da}  h_4^{'ab}-(h_{4})_{da}  h_3^{'ab} ] \} d  \Sigma \\
\end{split}  
\end{equation}
where $R$ and $R_s$ the curvature tensor for the spacetime $N$ and the Schwarzschild spacetime, respectively. \end{theorem}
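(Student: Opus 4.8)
The plan is to repeat the two--divergence--quantity argument of Theorem \ref{Minkowski_compare} almost verbatim, replacing the flat (or space--form) reference by the Schwarzschild spacetime with its conformal Killing--Yano form $Q=r\,dr\wedge dt$. I would introduce the two tangential vector fields
\[
W^a=[tr h_3\,\sigma^{ab}-h_3^{ab}]Q_{b4}-[tr h_4\,\sigma^{ab}-h_4^{ab}]Q_{b3},\qquad V^a=[tr h'_3\,\sigma^{ab}-h_3^{'ab}]Q_{b4}-[tr h'_4\,\sigma^{ab}-h_4^{'ab}]Q_{b3},
\]
the first built from the second fundamental forms $h_3,h_4$ of the reference embedding $X(\Sigma)$ in Schwarzschild and the second from the physical data $h'_3,h'_4$ of $\Sigma$ in $N$, and compute $\nabla_aW^a$ and $\nabla_aV^a$. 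The decisive difference from Theorem \ref{anti_de_Sitter_comparison} is that Schwarzschild is \emph{neither flat nor a space form}, so its curvature operator on $2$--forms is not a multiple of the identity and the reference curvature $R_s$ does not drop out; it enters $\nabla_aW^a$ precisely where the physical $R$ enters $\nabla_aV^a$, through the Codazzi equations of the respective embeddings.

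For $\nabla_aV^a$ the computation is word--for--word that of Theorem \ref{Minkowski_compare}: the Codazzi equations of $\Sigma$ in $N$ produce $R^{ab}_{\;\;\;\;a3}Q_{b4}-R^{ab}_{\;\;\;\;a4}Q_{b3}$, the conformal Killing--Yano equation converts the $(D_aQ)_{b4}$ terms into $\langle\frac{\partial}{\partial t},tr h'_3\,e_4\rangle$ and its companion, and one arrives at the pointwise identity \eqref{divergence_two}. For $\nabla_aW^a$ I would run the identical manipulation inside Schwarzschild, so that every $R$ and $h'$ is replaced by $R_s$ and $h$ and the quadratic $Q_{34}$--coefficient collapses to $2det(h_3)-2det(h_4)$. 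Integrating both identities the divergences drop; since $\zeta=\alpha_{e'_3}=\alpha_{e_3}$ is common to the two embeddings the $(d\zeta)^{ab}Q_{ab}$ contributions cancel in the difference, the $Q_{34}$--coefficients assemble into the bracket of \eqref{eq_comparison_sch_1}, and the Ricci--type contractions combine into exactly $(R^{ab}_{\;\;\;\;a4}-{R_s^{ab}}_{a4})Q_{b3}-(R^{ab}_{\;\;\;\;a3}-{R_s^{ab}}_{a3})Q_{b4}$.

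The step I expect to be the genuine obstacle is the one term with no analogue in the flat or space--form setting. Subtracting the two integrated identities leaves behind, from the reference side alone, the contraction
\[
\int_\Sigma {R_s^{ab}}_{43}\,Q_{ab}\,d\Sigma=\int_\Sigma Q_{bc}\sigma^{cd}[(h_3)_{da}h_4^{ab}-(h_4)_{da}h_3^{ab}]\,d\Sigma,
\]
the two expressions being equal by the Ricci equation of the reference embedding; here the \emph{tangential} components $Q_{ab}$ are paired with the \emph{normal--normal} curvature $R_s(e_3,e_4)$. In the Minkowski case this vanished because $R_s=0$, and in the anti--de Sitter case because a space form carries no such component; for Schwarzschild it is genuinely present, and a direct orthonormal--frame computation shows that ${R_s^{ab}}_{43}Q_{ab}$ need not vanish pointwise once the timelike normal $e_4$ is boosted away from $\frac{\partial}{\partial t}$ (the magnetic part of the curvature seen by $e_4$ is then nonzero). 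To finish I would exploit the special structure of $Q$: on Schwarzschild $Q=r\,dr\wedge dt=d(\tfrac12 r^2\,dt)$ is exact and a conformal Killing--Yano form with $div\,Q=-3\frac{\partial}{\partial t}$, and I would use these facts together with the second Bianchi identity to rewrite ${R_s^{ab}}_{43}Q_{ab}$ as a divergence on the closed surface $\Sigma$, so that its integral vanishes by Stokes' theorem. This refinement is needed only for the statement in full generality: in every application below one embeds into the totally geodesic slice $t=0$, where $Q_{ab}=0$ identically and the offending term disappears at once.
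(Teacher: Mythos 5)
Your core argument --- the two divergence quantities, the physical one computed exactly as in Theorem \ref{Minkowski_compare}, the reference one recomputed inside Schwarzschild so that its Codazzi equations contribute the ${R_s^{ab}}_{a3}$, ${R_s^{ab}}_{a4}$ contractions --- is precisely the paper's proof, which consists of two sentences saying just this. The important part of your write-up is the leftover term, and there your bookkeeping is faithful to the paper's own formulas: the reference identity is the Wang--Wang--Zhang formula quoted in Section 2, whose last bracket is $[{R_s^{ab}}_{43}-(d\alpha_{e_3})^{ab}]Q_{ab}$, while the physical identity \eqref{divergence_two} carries only $-(d\zeta)^{ab}Q_{ab}$; subtracting, the $(d\zeta)^{ab}Q_{ab}$ pieces cancel and $\int_\Sigma {R_s^{ab}}_{43}\,Q_{ab}\,d\Sigma$ survives. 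That term vanishes identically for a flat reference (Theorem \ref{Minkowski_compare}) and for a space form reference (Theorem \ref{anti_de_Sitter_comparison}), because in those cases $R_s(e_a,e_b,e_4,e_3)=0$, but not for Schwarzschild. It appears neither in \eqref{eq_comparison_sch_1} nor anywhere in the paper's proof, so what you have flagged is a genuine issue with the theorem as stated, not an artifact of your reconstruction.

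The gap in \emph{your} proof is the disposal of this term. The assertion that exactness of $Q$ plus the second Bianchi identity lets one write ${R_s^{ab}}_{43}Q_{ab}$ as a surface divergence is never substantiated, and there is no reason to expect it to be true: $Q_{ab}$ involves no choice of normal frame at all, and $(R_s)_{ab43}$ is invariant under boosts of $\{e_3,e_4\}$ because $e_3\wedge e_4$ is, so $\int_\Sigma {R_s^{ab}}_{43}\,Q_{ab}\,d\Sigma$ is an invariant of the embedded surface $X(\Sigma)$ alone; it vanishes when $X(\Sigma)$ lies in the static slice (there $Q_{ab}=0$), but for a general spacelike $2$-sphere in Schwarzschild no identity forces the integral to vanish, and if your mechanism produced one it would apply to any spacetime admitting a conformal Killing--Yano $2$-form and would make the term $[{R^{ab}}_{43}-(d\alpha_{e_3})^{ab}]Q_{ab}$ in the Wang--Wang--Zhang formula removable. (Your displayed appeal to the Ricci equation is also off by exactly the connection term: the Ricci equation identifies the commutator contraction with ${R_s^{ab}}_{43}Q_{ab}-(d\zeta)^{ab}Q_{ab}$, not with ${R_s^{ab}}_{43}Q_{ab}$ alone.) The accurate conclusion is that \eqref{eq_comparison_sch_1} holds as stated under the extra hypothesis $Q_{ab}=0$ on $X(\Sigma)$ --- e.g.\ an embedding into the slice $t=0$, which is what all of the paper's applications use --- and that in general its right-hand side must carry the additional term $\int_\Sigma {R_s^{ab}}_{43}\,Q_{ab}\,d\Sigma$. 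Your proof becomes complete once you add either that hypothesis or that term; as a proof of the literal statement it fails at precisely the step you yourself singled out as the obstacle.
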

\begin{proof}
The proof is the same as that of  Theorem \ref{Minkowski_compare}. The corresponding curvature terms appear when we apply the Codazzi equation of the surface in the Schwarzschild spacetime.
\end{proof}
We relate the left hand side of equation \eqref{eq_comparison_sch_1} to the quasi-local mass with the reference in the Schwarzschild spacetime when the frame $\{ e'_3, e'_4\}$  and $\{ e_3, e_4\}$ are the canonical gauge corresponding to a pair of isometric embedding $X$ of  $\Sigma$ into the Schwarzschild spacetime and the Killing vector field $T_0=\frac{\partial}{\partial t}$. 

The Killing vector field $T_0 $ generates a one-parameter family of isometries $\phi_t$ of the Schwarzschild spacetime. Let $C$ be the image of $\Sigma$ under the one-parameter family $\phi_t$. The intersection of $C$ with the static slice $t=0$ is $\widehat \Sigma$. By a slight abuse of terminology, we refer to $\widehat \Sigma$ as the projection of $\Sigma$. Let $\breve e_3$ be the outward unit normal of $\widehat \Sigma$ in the static slice $t=0$. Consider the pushforward of $\breve e_3$ by the one-parameter family  $\phi_t$, which is denoted by $\breve e_3$ again. Let $\breve e_4$ be the future directed unit normal of $\Sigma$ normal to $\breve e_3$ and extend it along $C$ in the same manner. Let $\{ \bar e_3, \bar e_4\}$ be the unique frame of the normal bundle of $\Sigma$ in $N$ such that 
\[  \langle H ,  \bar e_4 \rangle =  \langle H_0 ,  \breve e_4 \rangle. \]
The quasi-local energy of $\Sigma$ with respect to the pair $(X,T_0)$ is 
\[  E(\Sigma,X,T_0)= \frac{1}{8 \pi} \int  [\langle H_0 ,  \breve e_3 \rangle \langle \frac{\partial}{\partial t}, \breve e_4 \rangle+ \alpha_{\breve e_3}(T_0^T)-\langle H ,  \bar e_3 \rangle  \langle \frac{\partial}{\partial t}, \breve e_4 \rangle - \alpha_{\bar e_3}(T_0^T)] d\Sigma. \]
Assume again that $ \alpha_{\bar e_3}= \alpha_{\breve e_3}$ and use $tr h_4 =tr h'_4$, we have
\begin{equation} \label{eq_comparison_mass_sch_2}
 E(\Sigma,X,T_0)=    \frac{1}{8 \pi}  \int [-\langle \frac{\partial}{\partial t} , \breve e_4 \rangle(tr h_3 -tr h'_3) ]d  \Sigma \\
\end{equation}
To summarize, we have proved the following:
\begin{theorem} \label{comparison_quasilocal_mass_sch}
Given a surface $\Sigma$ in the spacetime $N$, suppose we have an isometric embedding $X$ of $\Sigma$ into the Schwarzschild spacetime and the Killing vector field $T_0=\frac{\partial}{\partial t}$
such that $ \alpha_{\bar e_3}= \alpha_{\breve e_3}$, then we have
\begin{equation} 
\begin{split}
   & E(\Sigma,X,T_0) \\
=& \frac{1}{8 \pi} \int [2 det(h_3) -2det(h_4) -tr h_3 tr h_3' +h_3 \cdot h_3' +tr h_4 tr h_4' - h_4 \cdot h_4' ] Q_{34}d  \Sigma \\
   &+\frac{1}{8 \pi} \int  \{ (R^{ab}_{\;\;\;\; a4} -{R_s^{ab}}_ {a4} )Q_{b3}-(R^{ab}_{\;\;\;\; a3}- {R_s^{ab}}_{a3} )Q_{b4} - Q_{bc} \sigma^{cd}[(h_{3})_{da}  h_4^{'ab}-(h_{4})_{da}  h_3^{'ab} ] \} d  \Sigma \\
\end{split}  
\end{equation}
\end{theorem}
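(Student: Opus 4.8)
The plan is to obtain Theorem~\ref{comparison_quasilocal_mass_sch} by specializing the general identity of Theorem~\ref{Sch_comparison} to the canonical gauge attached to the pair $(X,T_0)$, and by recognizing the left-hand side of that identity as $8\pi$ times the Wang--Yau quasi-local energy. All of the analytic substance---the divergence computation yielding the $det(h_3)$, $det(h_4)$ and curvature terms, and in particular the appearance of the Schwarzschild reference curvature $R_s$ through the Codazzi equation of $X(\Sigma)$ in the Schwarzschild background---is already packaged in Theorem~\ref{Sch_comparison}. What remains is an identification of the boundary integrands.

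First I would fix the canonical normal frames $\{\breve e_3,\breve e_4\}$ on $X(\Sigma)$ and $\{\bar e_3,\bar e_4\}$ on $\Sigma\subset N$ as described above, and write out $E(\Sigma,X,T_0)$. Under the standing hypothesis $\alpha_{\bar e_3}=\alpha_{\breve e_3}$, the two connection one-form contributions $\alpha_{\breve e_3}(T_0^T)$ and $\alpha_{\bar e_3}(T_0^T)$ cancel, so the energy reduces to $\frac{1}{8\pi}\int(\langle H_0,\breve e_3\rangle-\langle H,\bar e_3\rangle)\langle\frac{\partial}{\partial t},\breve e_4\rangle\,d\Sigma$. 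Identifying the mean-curvature components $\langle H_0,\breve e_3\rangle$ and $\langle H,\bar e_3\rangle$ with $-tr\,h_3$ and $-tr\,h_3'$ (the signs being those fixed by the convention for the second fundamental forms in the directions $\breve e_3$ and $\bar e_3$) produces exactly equation~\eqref{eq_comparison_mass_sch_2}.

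Next I would apply Theorem~\ref{Sch_comparison} with the reference frame $\{e_3,e_4\}$ taken to be $\{\breve e_3,\breve e_4\}$. The defining property $\langle H,\bar e_4\rangle=\langle H_0,\breve e_4\rangle$ of the frame $\{\bar e_3,\bar e_4\}$ forces $tr\,h_4=tr\,h_4'$, so the term $\langle\frac{\partial}{\partial t},e_3\rangle(tr\,h_4-tr\,h_4')$ on the left of \eqref{eq_comparison_sch_1} drops out. The left-hand side of \eqref{eq_comparison_sch_1} then collapses to $\int(-\langle\frac{\partial}{\partial t},\breve e_4\rangle)(tr\,h_3-tr\,h_3')\,d\Sigma$, which by \eqref{eq_comparison_mass_sch_2} equals $8\pi E(\Sigma,X,T_0)$. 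Transcribing the right-hand side of \eqref{eq_comparison_sch_1} then gives the asserted formula, completing the argument.

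The one genuinely new feature relative to the Minkowski and anti--de Sitter cases, and the only place where care is needed, is that the Schwarzschild spacetime is not a space form: its curvature tensor $R_s$ does not vanish and cannot be absorbed, so it must be retained in the combinations $R^{ab}{}_{a4}-{R_s}^{ab}{}_{a4}$ and $R^{ab}{}_{a3}-{R_s}^{ab}{}_{a3}$. Since Theorem~\ref{Sch_comparison} already records these terms correctly, I expect the remaining difficulty to be limited to the sign-and-frame bookkeeping of the previous paragraphs, which should be routine.
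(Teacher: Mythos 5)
Your proposal is correct and follows the paper's own route exactly: the paper likewise derives \eqref{eq_comparison_mass_sch_2} from the quasi-local energy expression by cancelling the connection one-form terms via $\alpha_{\bar e_3}=\alpha_{\breve e_3}$ and using $tr\,h_4=tr\,h_4'$ (forced by $\langle H,\bar e_4\rangle=\langle H_0,\breve e_4\rangle$), and then identifies the collapsed left-hand side of Theorem \ref{Sch_comparison} with $8\pi E(\Sigma,X,T_0)$. Your sign bookkeeping ($\langle H_0,\breve e_3\rangle=-tr\,h_3$, $\langle H,\bar e_3\rangle=-tr\,h_3'$) and your remark that the Schwarzschild curvature $R_s$ enters through the Codazzi equations and cannot be dropped are both consistent with the paper.
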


\end{document}